\providecommand{\U}[1]{\protect\rule{.1in}{.1in}}
\newtheorem{theorem}{Theorem}
\newtheorem{algorithm}{Algorithm}
\newenvironment{proof}[1][Proof]{\noindent\textbf{#1.} }{\ \rule{0.5em}{0.5em}}
\renewenvironment{frontmatter}{}{}
\newcommand{\R}{\mathbb{R}}
\begin{document}

\begin{frontmatter}
\title{A posteriori error estimates and adaptive mesh refinement \\
  for the Stokes-Brinkman problem\tnoteref{t1}}
\tnotetext[t1]{Supported by the U.S. National Science Foundation under grant DMS1521563.}

\author[umbc]{Kevin Williamson\corref{cor1}}
\ead{kwillia1@umbc.edu}

\author[ctu]{Pavel Burda}
\ead{Pavel.Burda@fs.cvut.cz}

\author[umbc]{Bed\v{r}ich Soused\'{\i}k}
\ead{sousedik@umbc.edu}

\cortext[cor1]{Corresponding author; tel.: +1 410-455-3298; fax: +1 410-455-1066.}

\address[umbc]{Department of Mathematics and Statistics, \\
  University of Maryland, Baltimore County,\\
  1000 Hilltop Circle, Baltimore, MD 21250, USA.}
\address[ctu]{Faculty of Mechanical Engineering, \\
Czech Technical University in Prague,  \\  Technick\'{a} 4, 166 07 Prague 6 - Dejvice, Czech Republic.}

\begin{abstract}
  The Stokes-Brinkman equations model flow in heterogeneous porous media by
  combining the Stokes and Darcy models of flow into a single system of equations. With
  suitable parameters, the equations can model either flow without detailed
  knowledge of the interface between the two regions. Thus, the Stokes-Brinkman
  equations provide an alternative to coupled Darcy-Stokes models.
  After a brief review of the Stokes-Brinkman problem and its discretization
  using Taylor-Hood finite elements, we present a residual-based a posteriori
  error estimate and use it to drive an adaptive mesh refinement process.
  We compare several strategies for the mesh refinement, and demonstrate its
  effectiveness by numerical experiments in both 2D and 3D.
\end{abstract}

\begin{keyword}
a posteriori error estimates \sep Stokes-Brinkman problem \sep adaptive mesh refinement
\end{keyword}

\maketitle
\end{frontmatter}

\section{Introduction}

The simulation of flow in porous media has numerous applications, to include reservoir simulation, nuclear waste disposal, and carbon dioxide sequestration. Such simulation is challenging for a variety of reasons. First, the domains tend to be fairly irregular, which complicates the model geometry. Second, the geologic formations consist of many varying materials, with different geologic properties. Third, there are often fractures and vugs within the domain that alter the effective permeabilities.
The standard approach to modeling these types of problems is to couple Darcy and Stokes and enforce the Beavers-Joseph-Saffman conditions along the interface~\cite{Arbogast2007,Beavers-1967-FPM,URQUIZA2008525}. The free-flow regions (fractures, vugs) are modeled using Stokes flow, whereas the porous region is modeled using Darcy's law~\cite{Darcy-1856-FPM,Whitaker-1986-FPM}. 
However, the two types of domains are not well-separated in reservoirs, and it may be difficult to determine the appropriate conditions to enforce along the interface. 

We model flow in porous media using the Stokes-Brinkman equations~\cite{Bear-1972-PM, Brinkman-1948-SBE, Gulbransen-2010-MMF, Laptev-2003-PPM, Popov-2007-MMM}, which combine Stokes and Darcy into a single system of equations. The Stokes-Brinkman equations reduce to Stokes or Darcy flow depending upon the coefficients and were suggested as a replacement for coupled Darcy and Stokes in~\cite{Popov-2007-MMM}. By careful selection of coefficients, the equations allow modeling of free-flow and porous domains together, thereby resolving issues along the interface.

In this paper, we present a residual-based a posteriori error estimate for the Stokes-Brinkman problem discretized using
Taylor-Hood finite elements, and we use it to drive an adaptive mesh refinement process. 
There are various strategies for a posteriori error estimation presented in the literature, see 
for example~\cite{Ainsworth-2000-AEE,Eriksson-1995-IAM,Verfurth-2013-AEE}.
The closest to the presented work are the estimates developed for the Stokes and Navier-Stokes equations
by Burda et al.~\cite{Burda-2000-AEE,Burda-2001-AEE,Burda-2003-AEE}, 
and in particular for the Stokes-Brinkman problem derived in 2D by Burda and Hasal~\cite{Burda-2015-AEE}. 
Here, we first extend the estimate from~\cite{Burda-2015-AEE} to~3D. 
Then, we use the estimate to drive an adaptive mesh refinement process. 
Finally, we study several mesh refinement strategies 
and present numerical experiments in both~2D and~3D. 

The paper is organized as follows. 
In Section~\ref{sec:sb}, we introduce the Stokes-Brinkman problem and its finite element discretization using Taylor-Hood elements. The error estimate is presented in Section~\ref{sec:aee}, and several mesh refinement strategies to be studied are presented in Section~\ref{sec:refinement}. In Section~\ref{sec:numerical}, we present the results of numerical experiments. Finally, in Section~\ref{sec:conclusion}, we summarize and conclude our work.

\section{Stokes-Brinkman problem} \label{sec:sb}

Let $\Omega \subset \R^d$, $d=2,3$ be a connected, open domain with Lipschitz boundary $\partial \Omega$. Let $\Omega_f \subset \Omega$ be a free-flow region within $\Omega$ and $\Omega_p \subset \Omega$ be a porous medium within $\Omega$ such that $\Omega_f \cap \Omega_p = \emptyset$ and $\overline{\Omega} = \overline{\Omega_f} \cup \overline{\Omega_p}$. Within the free-flow region, $\Omega_f$, the flow is governed by the Stokes equations
\begin{align}
  -\mu \Delta \vec{u} + \nabla p &= \vec{f} \label{eq:stokes-momentum} \\
  \nabla \cdot \vec{u} &= g, \label{eq:stokes-mass}
\end{align}
where $\mu$ is the viscosity of the fluid, $\vec{u} \colon \R^d \rightarrow \R^d$ is the velocity, $p \colon \R^d \rightarrow \R$ is the pressure, $\vec{f} \colon \R^d \rightarrow \R^d$ denotes external forces, and $g \colon \R^d \rightarrow \R$ denotes sources and sinks. Equation~(\ref{eq:stokes-momentum}) is derived from the conservation of momentum, and equation~(\ref{eq:stokes-mass}) is derived from the conservation of mass and denotes the incompressibility of the fluid. Within the porous media region, $\Omega_p$, the flow is governed by Darcy's law
\begin{align}
  \vec{u} &= -\frac{K}{\mu} (\nabla p - \vec{f}) \label{eq:darcy1} \\
  \nabla \cdot \vec{u} &= g, \label{eq:darcy2}
\end{align}
where $K$ is a symmetric positive definite permeability tensor. It is important to note that the velocity and pressure terms, $\vec{u}$ and $p$, are different than the same terms in the Stokes equations. In the Stokes equations, they denote the actual velocity and pressure of the fluid, whereas, in Darcy's law, they denote the averages over some representative element volume.

In the Stokes-Brinkman equations, the Stokes~(\ref{eq:stokes-momentum})--(\ref{eq:stokes-mass}) and Darcy~(\ref{eq:darcy1})--(\ref{eq:darcy2}) flows are combined into a single system of equations
\begin{align}
  -\mu^* \Delta \vec{u} + \mu K^{-1}\vec{u} + \nabla p &= \vec{f} \label{eq:sb-momentum} \\
  \nabla \cdot \vec{u} &= g, \label{eq:sb-mass}
\end{align}
where $\mu^*$ denotes the effective viscosity of the fluid. Both Stokes and Darcy flows are limiting cases of the Stokes-Brinkman equations using suitable choices of $\mu^*$ and $K$. If $\mu^* = 0$, Equation~(\ref{eq:sb-momentum}) is simply Darcy's law; whereas, if $K \gg 0$, it reduces to the Stokes equations. Within $\Omega_p$, we choose $K$ to be the Darcy permeability; within $\Omega_f$, we choose $K^{-1} = 0$. The choice of $\mu^*$ is crucial for resolving conditions between the free-flow and porous interface. If detailed knowledge is available, $\mu^*$ may be chosen to mimic the Beavers-Joseph-Saffman conditions along the interface. However, absent such detailed knowledge, we may select $\mu^* = \mu$ throughout the entire domain. This has the consequence of only a slight perturbation of Darcy's law in the porous domain~\cite{Laptev-2003-PPM, Popov-2007-MMM}.

The Stokes-Brinkman equations are accompanied by Dirichlet and Neumann conditions of the form
\begin{align}
  \vec{u} &= \vec{u}_D \quad \textrm{ on } \partial \Omega_D \label{eq:dirichlet} \\
  \frac{\partial \vec{u}}{\partial n} - p\vec{n}&= \vec{u}_N \quad \textrm{ on } \partial \Omega_N, \label{eq:neumann}
\end{align}
where $\partial \Omega_D$ and $\partial \Omega_N$ denote the Dirichlet and Neumann parts of the boundary, respectively, $\vec{n}$ is the unit outward normal, and $\frac{\partial \vec{u}}{\partial n}$ is the directional derivative of the velocity in the normal direction.

We seek a weak solution to equations~(\ref{eq:sb-momentum})--(\ref{eq:neumann}). Let us define the spaces
\begin{align}
  H^1_E(\Omega) &= \{ \vec{u} \in H^1(\Omega)^d \mid \vec{u} = \vec{u}_D \textrm{ on } \partial \Omega_D\} \nonumber \\
  H^1_{E_0}(\Omega) &= \{ \vec{u} \in H^1(\Omega)^d \mid \vec{u} = 0 \textrm{ on } \partial \Omega_D\}, \nonumber
\end{align}
and define the bilinear forms
\begin{align}
  a(\vec{u}, \vec{v}) &= \int_{\Omega} \left( \mu^* \nabla \vec{u} \colon \nabla \vec{v} + \mu \vec{u}^T K^{-1} \vec{v} \right) dx \label{eq:bilinear-a} \\
  b(\vec{u}, q) &= -\int_{\Omega} q \, \nabla \cdot \vec{u} dx , \label{eq:bilinear-b}
\end{align}
where $\nabla \vec{u} \colon \nabla \vec{v} = \sum_{i=1}^d \nabla \vec{u_i} \cdot \nabla \vec{v}_i$. In the weak formulation of Stokes-Brinkman, we wish to find $\vec{u} \in H^1_E(\Omega)$ and $p \in L_2(\Omega)$ such that
\begin{align}
  a(\vec{u}, \vec{v}) + b(\vec{v}, p) &= \int_{\Omega} \vec{f} \cdot \vec{v} dx + \int_{\partial \Omega_N} \vec{u}_N \cdot \vec{v} ds, \quad \forall \vec{v} \in H^1_{E_0}(\Omega) \label{eq:sb-weak-momentum} \\
  b(\vec{u}, q) &= -\int_{\Omega} gq dx, \quad \forall q \in L_2(\Omega). \label{eq:sb-weak-mass}
\end{align}
We need to set proper boundary conditions. In order to guarantee a unique velocity solution, the Dirichlet part of the boundary, $\partial \Omega_D$,  must have nonzero measure. Similarly, in order to guarantee a unique pressure solution, the Neumann part of the boundary, $\partial \Omega_N$, must have nonzero measure. In the case where $\partial \Omega_N$ has measure zero, the pressure solution is unique up to a constant, so we may impose the additional constraint $\int_{\Omega} p dx = 0$ in order to obtain a unique pressure. Furthermore, in such cases, we must impose a compatibility condition on the boundary data
\begin{displaymath}
  \int_{\partial \Omega_{+}} \vec{u}_D \cdot \vec{n} ds - \int_{\partial \Omega_{-}} \vec{u}_D \cdot \vec{n} ds = \int_{\partial \Omega} g ds ,
\end{displaymath}
where $\partial \Omega_{+} = \{ x \in \partial \Omega \mid \vec{u}_D \cdot \vec{n} > 0\}$ and $\partial \Omega_{-} = \{ x \in \partial \Omega \mid \vec{u}_D \cdot \vec{n} < 0\}$ are the outflow and inflow boundaries, respectively. In our experiments, we use Dirichlet conditions on the inflow and so-called \emph{do-nothing} conditions on the outflow. For more details, see, for example,~\cite[Chapter~3]{Elman-2014-FEF}.

We use the mixed finite element method to discretize~(\ref{eq:sb-weak-momentum})--(\ref{eq:sb-weak-mass}). Let $V_h \subset H_{E_0}^1(\Omega)$ denote the discretized space of velocities with basis $\{\psi_1,\psi_2,\ldots,\psi_n\}$ and $P_h \subset L^2(\Omega)$ the discretized space of pressures with basis $\{\phi_1,\phi_2,\ldots,\phi_m\}$. To incorporate the Dirichlet boundary conditions, we extend $V_h$ by defining additional basis functions $\psi_{n+1},\ldots,\psi_{n + n_\partial}$ and coefficients $u_j$, $j=n+1,\ldots,n+n_\partial$, such that $\sum_{j=n+1}^{n+n_\partial} u_j \psi_j$ interpolates the boundary data, $\vec{u}_D$. We then seek a finite element solution of the form $\vec{u}_h = \sum_{j=1}^n u_j \psi_j + \sum_{j=n+1}^{n+n_\partial} u_j \psi_j$ and $p_h = \sum_{j=1}^m p_j \phi_j$. The discrete weak formulation of Stokes-Brinkman is to find $\mathbf{u} = (u_1,\ldots,u_n)^T \in \R^n$ and $\mathbf{p} = (p_1,\ldots,p_m)^T \in \R^m$ such that
\begin{align}
  a\left(\sum_{j=1}^n u_j \psi_j + \sum_{j=n+1}^{n+n_\partial} u_j \psi_j, \vec{v}_h\right) + b\left(\vec{v}_h, \sum_{j=1}^m p_j \phi_j\right) &= \int_{\Omega} \vec{f} \cdot \vec{v}_h dx + \int_{\partial \Omega_N} \vec{u}_N \cdot \vec{v}_h ds, \quad \forall \vec{v}_h \in V_h \label{eq:discrete-weak1} \\
  b\left(\sum_{j=1}^n u_j \psi_j + \sum_{j=n+1}^{n+n_\partial} u_j \psi_j, q_h\right) &= -\int_{\Omega} gq_h dx, \quad \forall q_h \in P_h. \label{eq:discrete-weak2}
\end{align}
As the above must hold for all $(\vec{v}_h, q_h) \in V_h \times P_h$, we may choose $\vec{v}_h = \psi_i$ and $q_h = \phi_i$, so that we obtain the saddle point system
\begin{equation}
  \left[
    \begin{array}{cc}
      \mathbf{A} & \mathbf{B}^T \\
      \mathbf{B} & 0
    \end{array}
    \right]
  \left[
    \begin{array}{c}
      \mathbf{u} \\
      \mathbf{p}
    \end{array}
    \right] =
  \left[
    \begin{array}{c}
      \mathbf{f} \\
      \mathbf{g}
    \end{array}
    \right]. \nonumber
\end{equation}
The matrices $\mathbf{A}$ and $\mathbf{B}$ are discrete versions of the bilinear forms~(\ref{eq:bilinear-a}) and~(\ref{eq:bilinear-b}), respectively. The vectors $\mathbf{u}$ and $\mathbf{p}$ denote the coefficients of the discrete velocity and pressure, respectively, in the chosen finite element basis, and $\mathbf{f}$ and $\mathbf{g}$ are discretizations of the external forces and sources and sinks, respectively. We use Taylor-Hood P2/P1 finite elements to guarantee inf-sup stability, see, for example, the monographs~\cite{Brezzi-1991-MHF,Elman-2014-FEF}. This results in piecewise (bi)-quadratic approximation of the velocity and piecewise linear approximation of the pressure.

\section{A posteriori error estimate} \label{sec:aee}

Let $\vec{u}_h$ and $p_h$ denote the finite element solution using Taylor-Hood P2/P1 finite elements. We define the residuals $\mathbf{R_1}$ and $R_2$ of the discrete counterparts of~(\ref{eq:sb-momentum})--(\ref{eq:sb-mass}) as
\begin{align}
  \mathbf{R_1}(\vec{u}_h, p_h) &= \vec{f} + \mu^* \Delta \vec{u}_h - \mu K^{-1}\vec{u}_h - \nabla p_h \label{eq:residual-R1} \\
  R_2(\vec{u}_h, p_h) &= g - \nabla \cdot \vec{u}_h. \label{eq:residual-R2}
\end{align}
Because $u_h$ is piecewise (bi)-quadratic, $\Delta \vec{u}_h$ and $\nabla \cdot \vec{u}_h$ may be computed on element interiors, even though $u_h$ may have discontinuous first derivatives along element interfaces. Furthermore, since $p_h$ is piecewise linear, $\nabla p_h$ may be computed on element interiors, despite $p_h$ possibly having discontinuous first derivatives along element interfaces. Thus, within element interiors, the computation of the residuals~(\ref{eq:residual-R1})--(\ref{eq:residual-R2}) is well-defined. Due to the discontinuous derivatives at element interfaces, we define the flux jumps as follows. Given an edge/face $E$, let $T$ and $T\prime$ denote the elements sharing $E$ and $\vec{n}$ denote the unit outward normal of $E$ on $T$. Then the flux jump is defined as
\begin{equation*}
  \left \llbracket \mu \frac{\partial \vec{u}_h}{\partial n} - p_h \vec{n} \right \rrbracket_E = \left(\mu \frac{\partial \vec{u}_h}{\partial n} - p_h \vec{n}\right)_+ - \left(\mu \frac{\partial \vec{u}_h}{\partial n} - p_h \vec{n}\right)_-
\end{equation*}
where $(\cdot)_+$ uses values of $(\vec{u}_h,p_h)$ within $T$ and $(\cdot)_-$ uses values of $(\vec{u}_h,p_h)$ within $T\prime$. Note that, if using a continuous pressure approximation as with Taylor-Hood P2/P1 elements, the pressure terms in the flux jumps vanish. From the flux jumps, we define the equilibrated edge residuals
\begin{equation*}
  \mathbf{R_E}(\vec{u}_h,p_h) = \left\{
  \begin{array}{ll}
    \frac{1}{2} \left \llbracket \mu \frac{\partial \vec{u}_h}{\partial n} - p_h \vec{n} \right \rrbracket_E, & E \in \mathcal{E}_{h,\Omega} \\
    \vec{u}_N - \left(\mu \frac{\partial \vec{u}_h}{\partial n} - p_h \vec{n} \right), & E \in \mathcal{E}_{h,N} \\
    0, & E \in \mathcal{E}_{h,D}
  \end{array} \right.
\end{equation*}
where $\mathcal{E}_{h,\Omega}, \mathcal{E}_{h,N}, \mathcal{E}_{h,D}$ denote the interior, Neumann, and Dirichlet edges (or faces), respectively.

We now revisit the main theorem of~\cite{Burda-2015-AEE}. The theorem was proven there for the special case $d=2$ and $g = 0$, and we state it here also for the case $d=3$ and general $g$. The extension is relatively straightforward, but we include the full proof for completeness.

\begin{theorem} \label{thm:aee-estimate}
  Let $\Omega$ be a polygon in $\R^2$ or a polyhedron in $\R^3$. Let $\mathcal{T}_h$ be a family of regular triangulations of $\Omega$. Let $(\vec{u}_h, p_h)$ be the Taylor-Hood approximation of the solution $(\vec{u},p)$ of the Stokes-Brinkman problem. Then the error $(\vec{e}_u,e_p) = (\vec{u} - \vec{u}_h,p-p_h)$ satisfies the following a posteriori error estimate

  \begin{equation}
    \|\vec{e}_u\|_1 + \|e_p\|_0 \leq 2C_PC_IC_R \sum_{T \in \mathcal{T}_h} \left(h_T \|\mathbf{R_1}(\vec{u}_h,p_h)\|_{0,T} + \|R_2(\vec{u}_h, p_h)\|_{0,T} + h_T^{1/2} \sum_{E \in \mathcal{E}(T)} \| \mathbf{R_E}(\vec{u}_h,p_h) \|_{0,E}\right) \label{eq:aee-estimate}
  \end{equation}

  \noindent where $C_P, C_I, C_R$ are positive constants, $h_T$ is the diameter of element $T$, and $\mathcal{E}(T)$ denotes the set of edges (in 2D) or faces (in 3D) of element $T$.
\end{theorem}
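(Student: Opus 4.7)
The plan is to follow the classical residual-based a posteriori framework for saddle-point problems, adapted to include the Brinkman zero-order term $\mu K^{-1}\vec{u}$ and the nonzero divergence source $g$. Writing the combined energy norm $\|\vec{e}_u\|_1 + \|e_p\|_0$, I would first appeal to the inf-sup stability (and continuity) of the continuous Stokes-Brinkman problem to bound this norm by a dual norm of the weak residual,
\begin{equation*}
\|\vec{e}_u\|_1 + \|e_p\|_0 \le C_R \sup_{(\vec{v},q)} \frac{\mathcal{R}(\vec{v},q)}{\|\vec{v}\|_1 + \|q\|_0},
\end{equation*}
where $\mathcal{R}(\vec{v},q)$ is obtained by plugging $(\vec{u}_h, p_h)$ into the weak form~(\ref{eq:sb-weak-momentum})--(\ref{eq:sb-weak-mass}) and subtracting from the right-hand side. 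This is where the constant $C_R$ enters.

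Next I would localize the residual. Splitting $\int_\Omega = \sum_{T \in \mathcal{T}_h} \int_T$ and integrating by parts elementwise, the derivatives on the test function $\vec{v}$ move onto $\vec{u}_h$ and $p_h$, producing on element interiors exactly $\mu^*\Delta \vec{u}_h - \mu K^{-1}\vec{u}_h - \nabla p_h$ (which yields $\mathbf{R_1}$). The boundary contributions on interior edges/faces are assembled from the two adjacent elements into the jump $\llbracket \mu \frac{\partial \vec{u}_h}{\partial n} - p_h \vec{n} \rrbracket_E$, accounting for the factor $1/2$ in $\mathbf{R_E}$; on Neumann edges/faces they combine with the data $\vec{u}_N$; and on Dirichlet edges/faces they vanish because $\vec{v} \in H^1_{E_0}(\Omega)$. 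The divergence equation similarly contributes the element residual $R_2 = g - \nabla \cdot \vec{u}_h$, paired directly with $q \in L^2(\Omega)$.

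Then I would invoke Galerkin orthogonality: since $\mathcal{R}(\vec{v}_h, q_h) = 0$ for any $(\vec{v}_h,q_h) \in V_h \times P_h$, the test function $\vec{v}$ may be replaced by $\vec{v} - I_h \vec{v}$, where $I_h$ is a Cl\'ement (or Scott--Zhang) quasi-interpolant respecting the Dirichlet boundary. The standard local estimates, valid on both triangles and tetrahedra,
\begin{equation*}
\|\vec{v} - I_h \vec{v}\|_{0,T} \le C_I\, h_T \|\vec{v}\|_{1,\omega_T}, \qquad \|\vec{v} - I_h \vec{v}\|_{0,E} \le C_I\, h_T^{1/2} \|\vec{v}\|_{1,\omega_E},
\end{equation*}
applied elementwise to the localized residual, together with a Poincar\'e inequality on the patches $\omega_T, \omega_E$ (yielding $C_P$) and Cauchy--Schwarz, deliver the element contributions $h_T \|\mathbf{R_1}\|_{0,T}$ and $h_T^{1/2} \|\mathbf{R_E}\|_{0,E}$. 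The pressure test $q$ pairs directly with $R_2$ without interpolation, so no $h_T$ factor appears on that term. Summing over $T$ with the triangle inequality (rather than a global Cauchy--Schwarz over elements) produces the $\ell^1$-in-$T$ right-hand side with prefactor $2C_PC_IC_R$, the factor $2$ absorbing the simultaneous bounds on $\|\vec{e}_u\|_1$ and $\|e_p\|_0$.

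The extension from the 2D, $g=0$ case of~\cite{Burda-2015-AEE} to 3D with general $g$ is essentially notational: the quasi-interpolation, trace, and inverse estimates transfer to simplicial meshes in $\R^3$, and the inhomogeneity $g$ simply modifies $R_2$ without altering the argument's structure. The step requiring the most care, characteristic of Brinkman problems, is ensuring that the stability constant $C_R$ from the first step remains meaningful in the presence of the zero-order term $\mu K^{-1}\vec{u}$; this matters because $K^{-1}$ may vary by many orders of magnitude between $\Omega_f$ and $\Omega_p$, and the constant $C_R$ silently encodes this dependence.
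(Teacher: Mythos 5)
Your plan is sound and would deliver the theorem, but it reaches the crucial stability step by a different road than the paper. You bound $\|\vec{e}_u\|_1+\|e_p\|_0$ directly by the dual norm of the weak residual via the inf--sup (Babu\v{s}ka--Brezzi) stability of the continuous Stokes--Brinkman operator, in the style of Verf\"urth. The paper instead follows Eriksson et al.: it introduces an auxiliary dual Stokes--Brinkman problem whose data are $-\Delta\vec{e}_u$ and $-e_p$, tests that dual problem with the error itself (after a global Poincar\'e--Friedrichs inequality, which is where $C_P$ enters) to produce $\|\nabla\vec{e}_u\|_0^2+\|e_p\|_0^2$, then tests the primal residual with the dual solution $(\vec{\varphi}_u,\varphi_p)$, and only at the end invokes an a priori bound $\|\vec{\varphi}_u\|_1+\|\varphi_p\|_0\le C_R(\|\Delta\vec{e}_u\|_{-1}+\|e_p\|_0)$ followed by $\|\Delta\vec{e}_u\|_{-1}\le\|\vec{e}_u\|_1$ and cancellation of one factor of $\|\vec{e}_u\|_1+\|e_p\|_0$ --- which is where the paper's factor of $2$ actually arises, via $(a+b)^2\le 2(a^2+b^2)$, rather than from "absorbing the simultaneous bounds" as you suggest. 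The two stability statements are equivalent in substance (the a priori bound on the dual solution is the inf--sup condition in disguise), and everything downstream --- elementwise integration by parts producing $\mathbf{R_1}$, $R_2$ and the edge jumps, Galerkin orthogonality with the Cl\'ement interpolant, the local interpolation estimates, and bounding local norms by global ones to obtain the $\ell^1$ sum over elements --- is the same in both arguments. Your route is arguably cleaner because it avoids the detour through $-\Delta\vec{e}_u$ and the final cancellation; the paper's route makes $C_R$ concrete as the stability constant of an explicit dual problem. Your closing caveat that $C_R$ silently encodes the contrast in $K^{-1}$ is well taken and applies equally to the paper's dual-problem formulation, where it is likewise left implicit.
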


\begin{proof}
The ideas are based upon Eriksson et al~\cite{Eriksson-1995-IAM}. First, we recall the Poincar\'e-Friedrichs inequality
\begin{equation}
  \|\vec{e}_u\|_1^2 \leq C_P  \|\nabla \vec{e}_u\|_0^2 \label{eq:poincare-friedrichs}
\end{equation}
for a constant $C_P \geq 1$. We next define the dual Stokes-Brinkman problem
\begin{align}
  -\mu^* \Delta \vec{\varphi}_u + \mu K^{-1} \vec{\varphi}_u + \nabla \varphi_p &= -\Delta \vec{e}_u \label{eq:dual-sb1} \\
  \nabla \cdot \vec{\varphi}_u &= -e_p \label{eq:dual-sb2} \\
  \vec{\varphi}_u &= 0 \qquad \textrm{on } \partial \Omega. \label{eq:dual-sb3}
\end{align}
The weak form of~(\ref{eq:dual-sb1})--(\ref{eq:dual-sb3}) is to find $(\vec{\varphi}_u, \varphi_p) \in (H_0^1(\Omega))^d \times L^2(\Omega)$ such that
\begin{align}
  a(\vec{\varphi}_u, \vec{v}) + b(\vec{v}, \varphi_p) &= \int_{\Omega} \nabla \vec{e}_u \colon \nabla \vec{v} dx , \quad \forall \, \vec{v} \in (H^1(\Omega))^d \label{eq:dual-sb-weak1} \\
  b(\vec{\varphi}_u, q) &= \int_{\Omega} e_p q dx , \quad \forall \, q \in L^2(\Omega). \label{eq:dual-sb-weak2}
\end{align}
Since $\vec{e}_u \in (H^1(\Omega))^d$ and $ e_p \in L^2(\Omega)$, we may choose $\vec{v} = \vec{e}_u$ in~(\ref{eq:dual-sb-weak1}) and $q = e_p$ in~(\ref{eq:dual-sb-weak2}), and use~(\ref{eq:poincare-friedrichs}) to obtain
\begin{align}
  \frac{1}{C_P} \|\vec{e}_u\|_1^2 & \leq \|\nabla \vec{e}_u\|_0^2 = \int_{\Omega} \nabla \vec{e}_u \colon \nabla \vec{e}_u dx \nonumber \\
  \, & = a(\vec{\varphi}_u,\vec{e}_u) + b(\vec{e}_u, \varphi_p) \nonumber \\
  \, & = a(\vec{\varphi}_u, \vec{u}) - a(\vec{\varphi}_u, \vec{u}_h) + b(\vec{u}, \varphi_p) - b(\vec{u}_h, \varphi_p) \label{eq:estimate1-velocity} \\
  \|e_p\|_0^2 &= b(\vec{\varphi}_u, e_p) \nonumber \\
  \, & = b(\vec{\varphi}_u, p) - b(\vec{\varphi}_u, p_h). \label{eq:estimate1-pressure}
\end{align}
Since $C_P \geq 1$, we combine~(\ref{eq:estimate1-velocity}) and~(\ref{eq:estimate1-pressure}) as
\begin{align}
  \frac{1}{C_P}\left(\|\vec{e}_u\|_1^2 + \|e_p\|_0^2\right) & \leq a(\vec{\varphi}_u, \vec{u}) - a(\vec{\varphi}_u, \vec{u}_h) + b(\vec{u}, \varphi_p) - b(\vec{u}_h, \varphi_p) + b(\vec{\varphi}_u, p) - b(\vec{\varphi}_u, p_h) \nonumber \\
  \, & = \left[a(\vec{\varphi}_u, \vec{u}) + b(\vec{u}, \varphi_p) + b(\vec{\varphi}_u, p) \right] - \left[a(\vec{\varphi}_u, \vec{u}_h) + b(\vec{u}_h, \varphi_p) + b(\vec{\varphi}_u, p_h) \right]. \nonumber
\end{align}
We may also choose $\vec{v} = \vec{\varphi}_u$ in~(\ref{eq:sb-weak-momentum}) and $q = \varphi_p$ in~(\ref{eq:sb-weak-mass}), and use the definition of the residuals~(\ref{eq:residual-R1})--(\ref{eq:residual-R2}) to yield
\begin{align}
  \frac{1}{C_P}\left(\|\vec{e}_u\|_1^2 + \|e_p\|_0^2\right) & \leq \int_{\Omega} \vec{f} \cdot \vec{\varphi}_u dx  - \int_{\Omega} g \varphi_p dx - \left[a(\vec{\varphi}_u, \vec{u}_h) + b(\vec{u}_h, \varphi_p) + b(\vec{\varphi}_u, p_h) \right] \nonumber \\
  \, & = \int_{\Omega} \vec{f} \cdot \vec{\varphi}_u dx - \int_{\Omega} g \varphi_p dx \nonumber \\
  \, & \quad + \sum_{T \in \mathcal{T}_h} \left[
  \begin{array}[c]{c}
    \int_{T} \mu^* \Delta \vec{u}_h \cdot \vec{\varphi}_u dx - \int_{\partial T} \mu^* \frac{\partial \vec{u}_h}{\partial n} \cdot \vec{\varphi}_u ds - \int_{T} \mu \vec{\varphi}_u^T K^{-1} \vec{u}_h dx \\
     - \int_{T} \nabla p_h \cdot \vec{\varphi}_u dx + \int_{\partial T} p_h \vec{\varphi}_u \cdot \vec{n} ds + \int_{T} \varphi_p \nabla \cdot \vec{u}_h dx 
  \end{array} \right] \nonumber \\
\, & = \sum_{T \in \mathcal{T}_h} \int_{T} \left(f + \mu^* \Delta \vec{u}_h - \mu K^{-1}\vec{u}_h - \nabla p_h \right) \cdot \vec{\varphi}_u dx + \sum_{T \in \mathcal{T}_h} \int_{T} \left( \nabla \cdot \vec{u}_h - g \right) \varphi_p dx  \nonumber \\
  \, & \quad - \sum_{T \in \mathcal{T}_h} \int_{\partial T} \mu^* \frac{\partial \vec{u}_h}{\partial n} \cdot \vec{\varphi}_u ds + \sum_{T \in \mathcal{T}_h} \int_{\partial T} p_h \vec{\varphi}_u \cdot \vec{n} ds \nonumber \\
\, & = \sum_{T \in \mathcal{T}_h} \int_{T} \mathbf{R_1}(\vec{u}_h,p_h) \cdot \vec{\varphi}_u dx - \sum_{T \in \mathcal{T}_h} \int_{T} R_2(\vec{u}_h, p_h) \varphi_p dx \nonumber \\
  \, & \quad - \sum_{T \in \mathcal{T}_h} \int_{\partial T} \mu^* \frac{\partial \vec{u}_h}{\partial n} \cdot \vec{\varphi}_u ds + \sum_{T \in \mathcal{T}_h} \int_{\partial T} p_h \vec{\varphi}_u \cdot \vec{n} ds. \label{eq:aee-estimate-part2}
\end{align}
By taking $\vec{v}_h = \pi_h\vec{\varphi}_u$ and $q_h = \pi_h \varphi_p$, the Cl\'ement interpolants~\cite{Ciarlet-2002-FEM}, in~(\ref{eq:discrete-weak1}) and~(\ref{eq:discrete-weak2}), we obtain
\begin{align}
  0 &= \int_{\Omega} \vec{f} \cdot \pi_h\vec{\varphi}_u dx - \int_{\Omega} g \pi_h \varphi_p dx - \left[a(\vec{u}_h,\pi_h\vec{\varphi}_u) + b(\vec{u}_h, \pi_h\varphi_p) + b(\pi_h\vec{\varphi}_u, p_h) \right] \nonumber \\
  \, &= \sum_{T \in \mathcal{T}_h} \int_{T} \mathbf{R_1}(\vec{u}_h,p_h) \cdot \pi_h \vec{\varphi}_u dx - \sum_{T \in \mathcal{T}_h} \int_{T} R_2(\vec{u}_h, p_h) \pi_h \varphi_p dx \nonumber \\
  \, & \quad - \sum_{T \in \mathcal{T}_h} \int_{\partial T} \mu^* \frac{\partial \vec{u}_h}{\partial n} \cdot \pi_h \vec{\varphi}_u ds + \sum_{T \in \mathcal{T}_h} \int_{\partial T} p_h \pi_h \vec{\varphi}_u \cdot \vec{n} ds . \nonumber
\end{align}
Subtracting this from~(\ref{eq:aee-estimate-part2}), we obtain
\begin{align}
  \frac{1}{C_P}\left(\|\vec{e}_u\|_1^2 + \|e_p\|_0^2\right) & \leq \sum_{T \in \mathcal{T}_h} \int_{T} \mathbf{R_1}(\vec{u}_h,p_h) \cdot (\vec{\varphi}_u - \pi_h \vec{\varphi}_u) dx - \sum_{T \in \mathcal{T}_h} \int_{T} R_2(\vec{u}_h, p_h) (\varphi_p - \pi_h \varphi_p) dx \nonumber \\
  \, & \quad - \sum_{T \in \mathcal{T}_h} \int_{\partial T} \mu^* \frac{\partial \vec{u}_h}{\partial n} \cdot (\vec{\varphi}_u - \pi_h \vec{\varphi}_u) ds + \sum_{T \in \mathcal{T}_h} \int_{\partial T} p_h (\vec{\varphi}_u - \pi_h \vec{\varphi}_u) \cdot \vec{n} ds  \nonumber \\
  \, & = \sum_{T \in \mathcal{T}_h} \int_{T} \mathbf{R_1}(\vec{u}_h,p_h) \cdot (\vec{\varphi}_u - \pi_h \vec{\varphi}_u) dx - \sum_{T \in \mathcal{T}_h} \int_{T} R_2(\vec{u}_h, p_h) (\varphi_p - \pi_h \varphi_p) dx \nonumber \\
  \, & \quad - \sum_{T \in \mathcal{T}_h} \sum_{E \in \mathcal{E}(T)} \int_{E} \left( \frac{1}{2} \left \llbracket \mu^* \frac{\partial \vec{u}_h}{\partial n} - p_h \vec{n} \right \rrbracket_E \right) (\vec{\varphi}_u - \pi_h \vec{\varphi}_u) ds . \nonumber
\end{align}
In the above, the sum over $E \in \mathcal{E}(T)$ is taken over edges of the triangle in 2D and faces of the tetrahedron in~3D. Using the Schwarz inequality,
\begin{align}
  \|\vec{e}_u\|_1^2 + \|e_p\|_0^2 & \leq C_P \sum_{T \in \mathcal{T}_h} \bigg( \| \mathbf{R_1}(\vec{u}_h,p_h) \|_{0,T} \|\vec{\varphi}_u - \pi_h \vec{\varphi}_u \|_{0,T} + \| R_2(\vec{u}_h, p_h) \|_{0,T} \| \varphi_p - \pi_h \varphi_p \|_{0,T} \bigg) \nonumber \\
  \, & \quad + C_P \sum_{T \in \mathcal{T}_h} \sum_{E \in \mathcal{E}(T)} \left\| \frac{1}{2} \left \llbracket \mu^* \frac{\partial \vec{u}_h}{\partial n} - p_h \vec{n} \right \rrbracket_E \right\|_{0,E} \| \vec{\varphi}_u - \pi_h \vec{\varphi}_u \|_{0,E}. \nonumber
\end{align}
Using the properties of the interpolants (cf.~\cite{Brezzi-1991-MHF}), there exists a constant $C_I > 0$ such that
\begin{align}
  \|\vec{\varphi}_u - \pi_h \vec{\varphi}_u \|_{0,T} & \leq C_I h_T \|\vec{\varphi}_u\|_{1,T} \nonumber \\
  \| \varphi_p - \pi_h \varphi_p \|_{0,T} & \leq C_I \|\varphi_p\|_{0,T} \nonumber \\
  \| \vec{\varphi}_u - \pi_h \vec{\varphi}_u \|_{0,E} & \leq C_I h_T^{1/2} \|\vec{\varphi}_u\|_{1,T}, \nonumber
\end{align}
where $h_T$ is the diameter of element $T$. Thus, we obtain
\begin{align}
  \|\vec{e}_u\|_1^2 + \|e_p\|_0^2 & \leq C_PC_I \sum_{T \in \mathcal{T}_h} \bigg( h_T \| \mathbf{R_1}(\vec{u}_h,p_h) \|_{0,T} \|\vec{\varphi}_u\|_1 + \| R_2(\vec{u}_h, p_h) \|_{0,T} \| \varphi_p \|_0 \bigg) \nonumber \\
  \, & \quad + C_PC_I \sum_{T \in \mathcal{T}_h} \sum_{E \in \mathcal{E}(T)} h_T^{1/2} \left\| \frac{1}{2} \left \llbracket \mu^* \frac{\partial \vec{u}_h}{\partial n} - p_h \vec{n} \right \rrbracket_E \right\|_{0,E} \| \vec{\varphi}_u \|_1. \nonumber
\end{align}
We then use properties of the dual solution (cf.~\cite{Brezzi-1991-MHF}) to derive another constant $C_R > 0$ such that
\begin{align}
  \|\vec{e}_u\|_1^2 + \|e_p\|_0^2 & \leq C_P C_I C_R \sum_{T \in \mathcal{T}_h} \left[
    \begin{array}[c]{c}
      h_T \| \mathbf{R_1}(\vec{u}_h,p_h) \|_{0,T} + \| R_2(\vec{u}_h, p_h) \|_{0,T} \\
      + \sum_{E \in \mathcal{E}(T)} h_T^{1/2} \left\| \frac{1}{2} \left \llbracket \mu^* \frac{\partial \vec{u}_h}{\partial n} - p_h \vec{n} \right \rrbracket_E \right\|_{0,E}
    \end{array} \right] (\| \Delta \vec{e}_u\|_{-1} + \|e_p\|_0). \nonumber
\end{align}
Finally, using the inequality $\| \Delta \vec{e}_u \|_{-1} \leq \| \vec{e}_u \|_1$,
\begin{align}
  \left(\|\vec{e}_u\|_1 + \|e_p\|_0 \right)^2 & \leq 2 \left(\| \vec{e}_u \|_1^2 + \| e_p \|_0^2 \right) \nonumber \\
  \, & \leq 2 C_P C_I C_R \sum_{T \in \mathcal{T}_h} \left[
    \begin{array}[c]{c}
      h_T \| \mathbf{R_1}(\vec{u}_h,p_h) \|_{0,T} + \| R_2(\vec{u}_h, p_h) \|_{0,T} \\
      + \sum_{E \in \mathcal{E}(T)} h_T^{1/2} \left\| \frac{1}{2} \left \llbracket \mu^* \frac{\partial \vec{u}_h}{\partial n} - p_h \vec{n} \right \rrbracket_E \right\|_{0,E}
    \end{array} \right] (\| \vec{e}_u\|_1 + \|e_p\|_0). \nonumber
\end{align}
The a posteriori error estimate in Theorem~\ref{thm:aee-estimate} is obtained by canceling a factor of $\|\vec{e}_u\|_1 + \|e_p\|_0$.
\end{proof}

The error estimate~(\ref{eq:aee-estimate}) in Theorem~\ref{thm:aee-estimate} is defined over the norm $\|\vec{e}_u\|_1 + \|e_p\|_0$. We would prefer the estimate in the norm $(\|\vec{e}_u\|_1^2 + \|e_p\|_0^2)^{1/2}$. To this effect, we note
\begin{align}
  \|\vec{e}_u\|_1^2 + \|e_p\|_0^2 &= \sum_{T \in \mathcal{T}_h} \left(\|\vec{e}_u\|_{1,T}^2 + \|e_p\|_{0,T}^2\right) \nonumber \\
  \, &\leq \sum_{T \in \mathcal{T}_h} \left(\|\vec{e}_u\|_{1,T} + \|e_p\|_{0,T} \right)^2 \nonumber \\
  \, &\leq \sum_{T \in \mathcal{T}_h} C_1 \left(h_T\|\mathbf{R_1}(\vec{u}_h,p_h)\|_{1,T} + \|R_2(\vec{u}_h,p_h)\|_{0,T} + h_T^{1/2}\sum_{E \in \mathcal{E}(T)} \|\mathbf{R_E}(\vec{u}_h,p_h)\|_{0,E}\right)^2 \nonumber \\
  \, &\leq \sum_{T \in \mathcal{T}_h} C_2 \left(h_T^2\|\mathbf{R_1}(\vec{u}_h,p_h)\|_{1,T}^2 + \|R_2(\vec{u}_h,p_h)\|_{0,T}^2 + h_T\sum_{E \in \mathcal{E}(T)} \|\mathbf{R_E}(\vec{u}_h,p_h)\|_{0,E}^2\right) \nonumber
\end{align}
for some constants $C_1,C_2 > 0$, where $C_1$ depends upon $C_P,C_I,C_R$. By defining our elementwise error indicator $\eta_T$ as
\begin{align}
  \eta_T^2 &= h_T^2\|\mathbf{R_1}(\vec{u}_h,p_h)\|_{1,T}^2 + \|R_2(\vec{u}_h,p_h)\|_{0,T}^2 + h_T \sum_{E \in \mathcal{E}(T)} \|\mathbf{R_E}(\vec{u}_h,p_h)\|_{0,E}^2, \label{eq:indicator1}
\end{align}
we may write the global error estimate as
\begin{align}
  \|\vec{e}_u\|_1^2 + \|e_p\|_0^2 & \leq C_2 \sum_{T \in \mathcal{T}_h} \eta_T^2 . \label{eq:aee-estimate2}
\end{align}

\section{Adaptive mesh refinement} \label{sec:refinement}

We use the error indicator~(\ref{eq:indicator1}) to mark elements for refinement. In~\cite[pp.~64--65]{Verfurth-2013-AEE}, two strategies for marking elements are presented. Both require a parameter $\theta \in (0,1)$ and produce a subset $\widetilde{\mathcal{T}}$ of elements marked for refinement. The first strategy, named the \emph{maximum strategy}, is summarized in Algorithm~\ref{alg:maximum}. It marks for refinement all elements for which the error indicator is greater than
or equal to~$\theta$ times the maximum over all elements.

\begin{algorithm}[Maximum Strategy]\label{alg:maximum}
  Given: a partition $\mathcal{T}$, an error indicator $\eta_T$ for each element $T \in \mathcal{T}$, and a threshold $\theta \in (0,1)$. \\
  Sought: a subset $\widetilde{\mathcal{T}}$ of marked elements to be refined.
  \begin{enumerate}
  \item Compute $\eta_{\mathcal{T},\textrm{max}} = \max_{T \in \mathcal{T}} \eta_T$.
  \item Mark all elements $T$ such that $\eta_T \geq \theta {} \eta_{\mathcal{T},\textrm{max}}$ and place in $\widetilde{\mathcal{T}}$.
  \end{enumerate}
\end{algorithm}

The second strategy, named the \emph{equilibration strategy}, marks the elements with the largest error until $\sum_{T \in \widetilde{\mathcal{T}}} \eta_T^2 \geq \theta \sum_{T \in \mathcal{T}} \eta_T^2$. This strategy marks the elements with the largest error until a given proportion of the total error from~(\ref{eq:aee-estimate2}) is attained.

\begin{algorithm}[Equilibration Strategy]\label{alg:equilib}
  Given: a partition $\mathcal{T}$, an error indicator $\eta_T$ for each element $T \in \mathcal{T}$, and a threshold $\theta \in (0,1)$. \\
  Sought: a subset $\widetilde{\mathcal{T}}$ of marked elements to be refined.
  \begin{enumerate}
  \item Compute $\Theta_\mathcal{T} = \sum_{T \in \mathcal{T}} \eta_T^2$ and set $\Sigma_\mathcal{T} = 0$ and $\widetilde{\mathcal{T}} = \emptyset$.
  \item If $\Sigma_{\mathcal{T}} \geq \theta {} \Theta_{\mathcal{T}}$, return $\widetilde{\mathcal{T}}$. Else, go to step 3.
  \item Compute $\widetilde{\eta}_{\mathcal{T},\textrm{max}} = \max_{T \in \mathcal{T} \setminus \widetilde{\mathcal{T}}} \eta_T$.
  \item For each element $T$ in $\mathcal{T} \setminus \widetilde{\mathcal{T}}$ such that $\eta_T = \widetilde{\eta}_{\mathcal{T},\textrm{max}}$
    \begin{enumerate}
    \item Update $\Sigma_\mathcal{T} \leftarrow \Sigma_\mathcal{T} + \eta_T^2$
    \item Add $T$ to $\widetilde{\mathcal{T}}$
    \end{enumerate}
  \item Go to step 2.
  \end{enumerate}
\end{algorithm}

There may be cases where the errors are concentrated in only a few elements, with the remaining contributing very little to the total error. In these cases, we should choose a small $\epsilon \in [0,1]$ and always mark for refinement the $\epsilon |\mathcal{T}|$ elements of largest error and then apply either the maximum strategy or the equilibration strategy to the remaining $(1-\epsilon)|\mathcal{T}|$ elements.

\section{Numerical experiments} \label{sec:numerical}

In this section, we test the effectiveness of the error estimator~(\ref{eq:indicator1}) to drive an adaptive mesh refinement process. We perform three experiments. The first experiment is on a nonconvex 2D domain, the second is on a 2D domain with discontinuous inflow and an obstacle, and the third is on a nonconvex 3D domain. In all experiments, we use the maximum~(Algorithm~\ref{alg:maximum}) and equilibration~(Algorithm~\ref{alg:equilib}) strategies with parameters $(\epsilon,\theta) \in \{0,0.001,0.01\} \times \{0.25,0.5,0.75\}$. We compare results with the uniform refinement strategy (i.e., marking all elements for refinement) to show that the adaptive procedure successfully yields a more accurate solution with fewer degrees of freedom with the error given by~(\ref{eq:aee-estimate2}). For simplicity, we assume the constant factor in the estimate is $1$ as its exact value is irrelevant when comparing estimates from successive meshes. We used a computer with two 8-core 2.10 GHz CPUs\ with 1 TB\ of memory running Linux openSUSE 42.3 and \textsc{Matlab} version {9.2.0.538062 (R2017a)}. We implemented the error estimates in \textsc{Matlab}, generated and refined the meshes with Netgen~\cite{Schoberl-1997} version 6.2, and visualized the solutions using Paraview~\cite{Paraview} version 5.1.2. The underlying linear systems were solved using the sparse direct solver (backslash) within \textsc{Matlab}.

\subsection{2D nonconvex experiment}

For the 2D nonconvex numerical experiment, we consider the nonconvex domain depicted in Figure~\ref{fig:domain}. The domain is partitioned into three regions: one Stokes region in which the fluid free-flows ($K^{-1} = 0$) and two Darcy regions with permeability tensors $K = 5 \cdot 10^{-4} I$ and $K = 5 \cdot 10^{-2} I$, where $I$ denotes the identity matrix. The fluid flows from left to right, with parabolic inflow at the left end of the first Darcy region and a do-nothing outflow at the right end of this region. No flow is allowed through any other boundary edge. The Stokes and second Darcy regions appear as pockets at the top and bottom of the domain. The fluid has a constant viscosity $\mu = \mu^* = 10^{-3}$ throughout. We used zero right-hand sides $\vec{f} = 0$ and $g = 0$ throughout the domain.

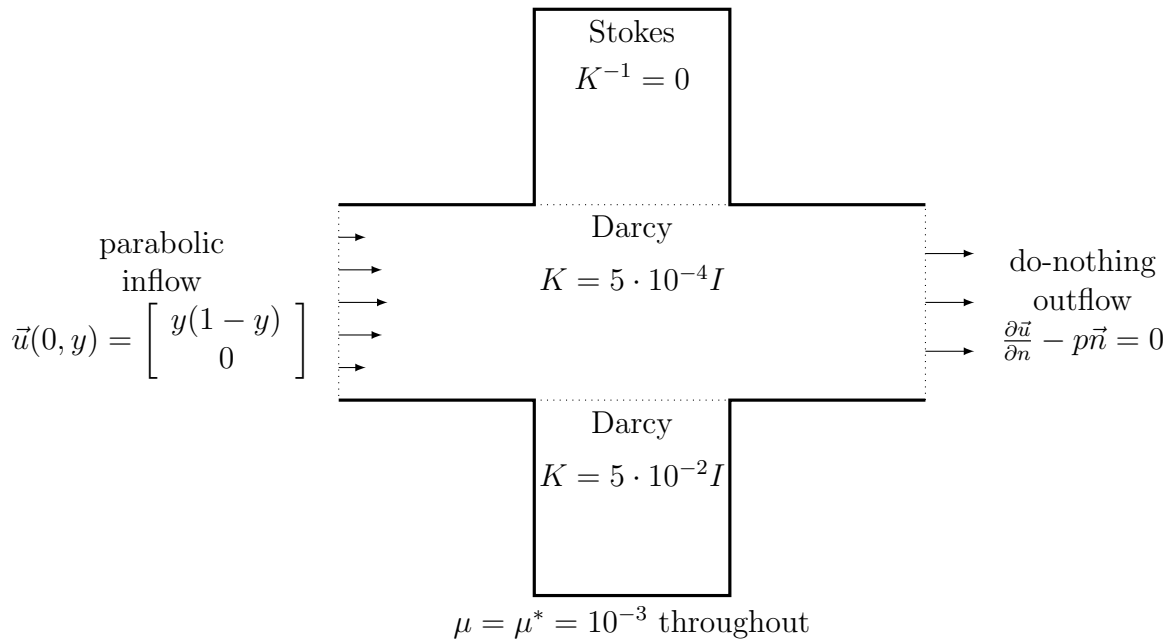
\begin{figure}[h!]
  \centering
  \begin{tikzpicture}[scale=2.6,>=latex]
    \draw[very thick] (0,0) -- (1,0) -- (1,-1) -- (2,-1) -- (2,0) -- (3,0);
    \draw[dotted] (3,0) -- (3,1); 
    \draw[very thick] (3,1) -- (2,1) -- (2,2) -- (1,2) -- (1,1) -- (0,1);
    \draw[dotted] (0,1) -- (0,0); 
    \draw[dotted] (1,1) -- (2,1); 
    \draw[dotted] (1,0) -- (2,0); 

    \node[anchor=north] at (1.5,2) (stokes) {Stokes};
    \node[anchor=north] at (stokes.south) {$K^{-1} = 0$};
    \node[anchor=north] at (1.5,1) (darcy1) {Darcy};
    \node[anchor=north] at (darcy1.south) {$K = 5 \cdot 10^{-4} I$};
    \node[anchor=north] at (1.5,0) (darcy2) {Darcy};
    \node[anchor=north] at (darcy2.south) {$K = 5 \cdot 10^{-2} I$};


    \draw[->] (0,0.166667) -- (0.138889,0.166667);
    \draw[->] (0,0.333333) -- (0.222222,0.333333);
    \draw[->] (0,0.500000) -- (0.250000,0.500000);
    \draw[->] (0,0.666667) -- (0.222222,0.666667);
    \draw[->] (0,0.833333) -- (0.138889,0.833333);
    \node[anchor=east] at (0,0.5) {\begin{tabular}{c}
        parabolic \\
        inflow \\
        $\vec{u}(0,y) = \left[\begin{array}{c}
          y(1-y) \\
          0
        \end{array}\right]$
    \end{tabular} };

    \draw[->] (3,0.75) -- (3.25,0.75);
    \draw[->] (3,0.5) -- (3.25,0.5);
    \draw[->] (3,0.25) -- (3.25,0.25);
    \node[anchor=west] at (3.25,0.5) {\begin{tabular}{c}
        do-nothing \\
        outflow \\
        $\frac{\partial \vec{u}}{\partial n} - p\vec{n} = 0$
    \end{tabular} };

    \node[anchor=north] at (1.5,-1) {$\mu = \mu^* = 10^{-3}$ throughout};
  \end{tikzpicture}
  \caption{Nonconvex 2D domain used for numerical experiments.}
  \label{fig:domain}
\end{figure}

We tested the maximum~(Algorithm~\ref{alg:maximum}) and equilibration~(Algorithm~\ref{alg:equilib}) strategies on this domain with all combinations of $(\epsilon,\theta) \in \{0,0.001,0.01\} \times \{0.25,0.5,0.75\}$. There were 1431 degrees of freedom in the initial mesh. For each choice of adaptive strategy, we refined the mesh 10 times, and, for comparison, we uniformly refined the mesh 5 times.

Figure~\ref{fig:2d-adapt-vs-uniform} compares the numbers of degrees of freedom against the computed error estimates for each experiment. The degrees of freedom are plotted on the x-axis on a log scale; the error estimates are plotted on the y-axis also on a log scale. There are 9 subplots arranged on a 3 x 3 grid. The rows correspond to the choices of $\epsilon$ with the top row $\epsilon = 0$, the middle $\epsilon = 0.001$, and the bottom $\epsilon = 0.01$. The columns correspond to the choices of $\theta$ with the left column $\theta = 0.25$, the middle column $\theta = 0.5$, and the right column $\theta = 0.75$. Each plot has the results from the uniform refinement as the dashed black line, along with the results from the maximum (red + line) and equilibration (blue dotted line) strategies. We can observe that the error decreases as the degrees of freedom increase, as we expect. However, for each of the adaptive strategies, we attain lower errors for a given size of the problem. This suggests that the adaptive strategies yield better accuracy with less work.

\begin{figure}[h!]
  \centering
  \includegraphics[width=18cm]{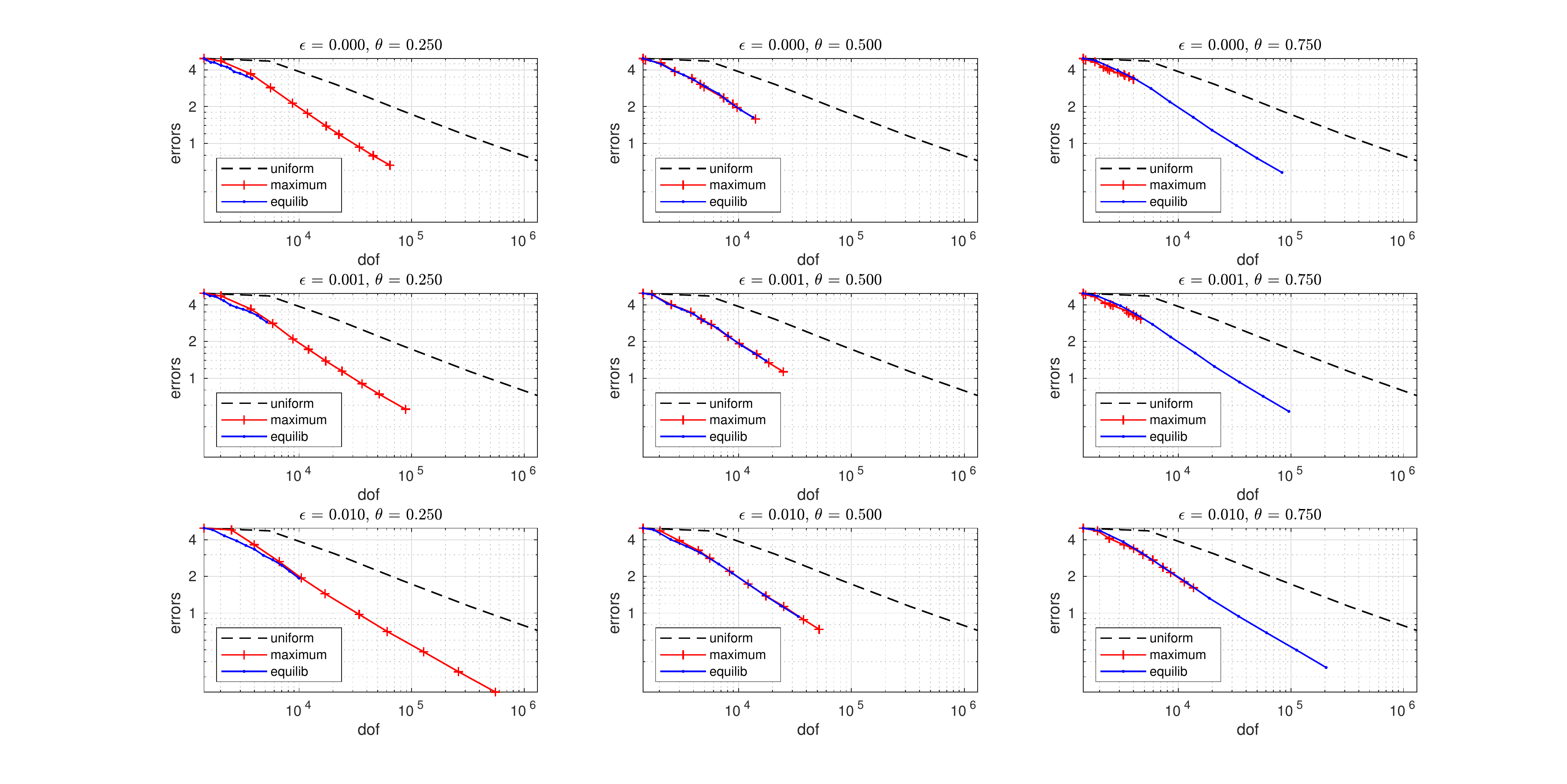}
  \caption{Comparison of each adaptive mesh refinement strategy against uniform refinement for the nonconvex 2D experiment.}
  \label{fig:2d-adapt-vs-uniform}
\end{figure}

Figure~\ref{fig:2d-comp-epsilon} shows how the choice of $\epsilon$ affects the performance of the adaptive strategy. There are 6 plots, each fixing a choice of strategy (maximum or equilibration) and a choice of $\theta$ but varying $\epsilon$. The plots are arranged in a 2x3 grid. The top row displays results with the maximum strategy and the bottom displays results with the equilibration strategy; the first column displays results for $\theta = 0.25$, the middle for $\theta = 0.5$, and the final for $\theta = 0.75$. For each plot, the black (o) line is $\epsilon = 0$, the red (+) line is $\epsilon = 0.001$, and the blue dotted line is $\epsilon = 0.01$. As before, the degrees of freedom are plotted on the x-axis on a log scale with the computed errors on the y-axis on a log scale. The plots show that the choice of $\epsilon$ has very little effect on the accuracy of the computation per degree of freedom. However, there is a significant difference in the growth in degrees of freedom per iteration of refinement. Choosing a small value of $\epsilon$ causes the degrees of freedom to grow slowly, as fewer elements are refined upfront, which would require more iterations of refinement to reach a desired error tolerance. On the other hand, choosing a large value of $\epsilon$ might cause the degrees of freedom to grow too rapidly, resulting in a larger problem than necessary for a chosen error tolerance.

\begin{figure}[h!]
  \centering
  \includegraphics[width=18cm]{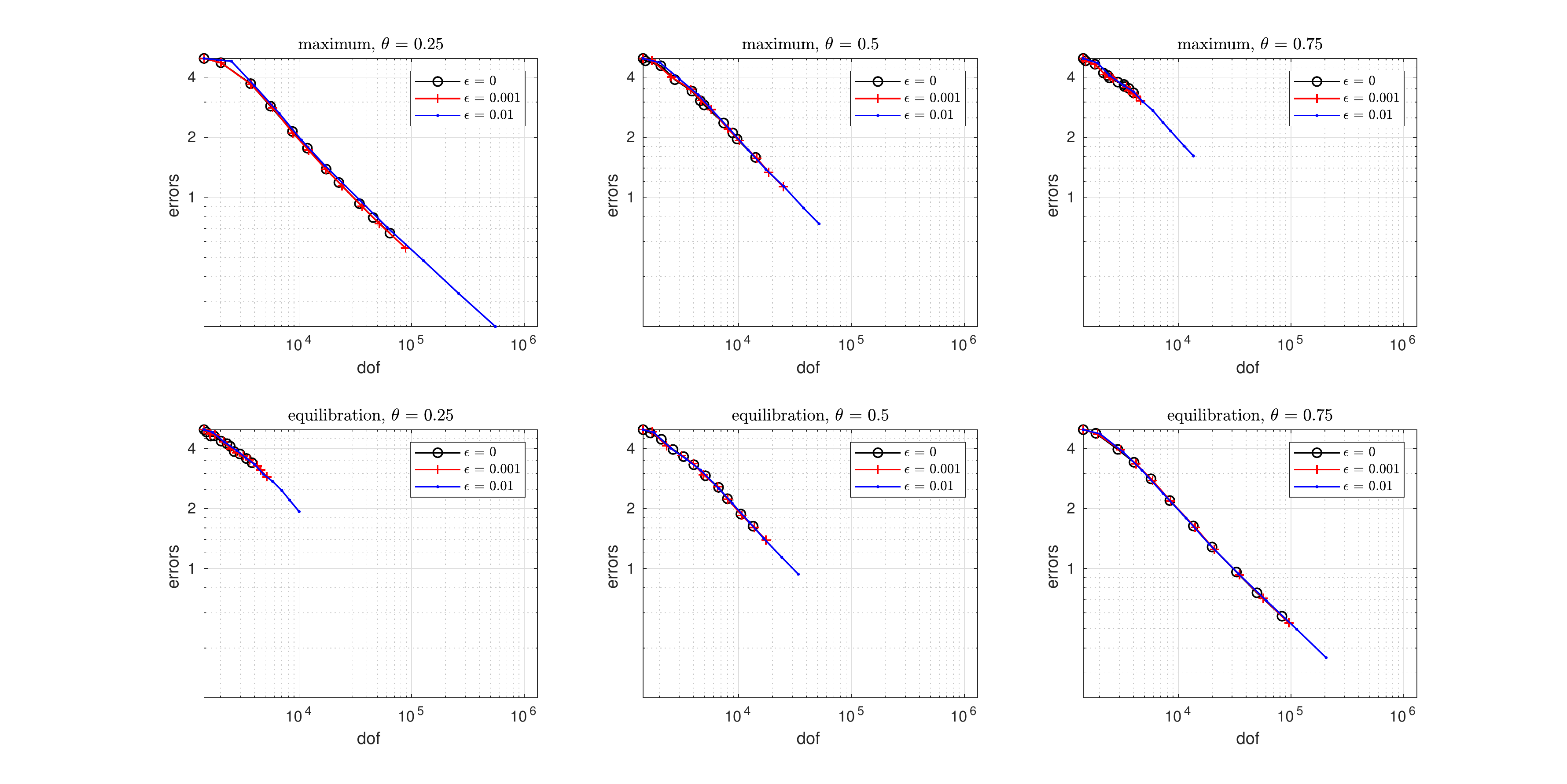}
  \caption{Comparison of varying choices of $\epsilon$ for each adaptive strategy for the nonconvex 2D experiment.}
  \label{fig:2d-comp-epsilon}
\end{figure}

Figure~\ref{fig:2d-comp-theta} shows how the choice of $\theta$ affects the performance of the adaptive strategy. The setup is similar to that of Figure~\ref{fig:2d-comp-epsilon} except that, now, $\epsilon$ is fixed per plot and $\theta$ varies. The first and second rows display results for the maximum and equilibration strategies, respectively; the first column displays results for $\epsilon = 0$, the second $\epsilon = 0.001$, and the final $\epsilon = 0.01$. As with varying $\epsilon$, varying $\theta$ has very little effect on the accuracy of the computation per degree of freedom. However, the choice of $\theta$ does have an effect on the growth of the degrees of freedom. For the maximum strategy, a smaller value of $\theta$ results in a larger growth in degrees of freedom. This is expected as a smaller value of $\theta$ results in more elements being marked for refinement. For the equilibration strategy, a smaller value of $\theta$ results in a smaller growth, as expected.

\begin{figure}[h!]
  \centering
  \includegraphics[width=18cm]{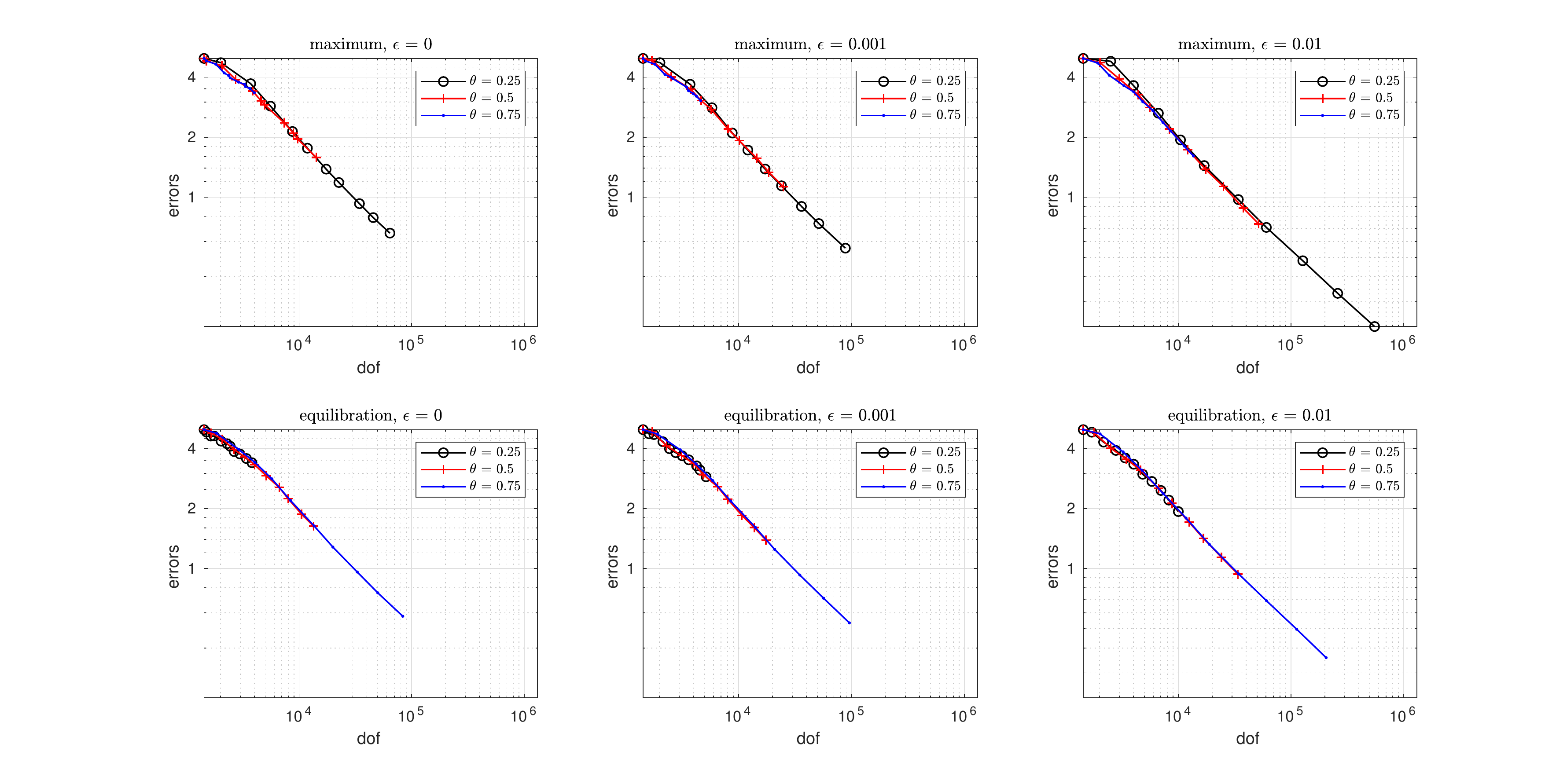}
  \caption{Comparison of varying choices of $\theta$ for each adaptive strategy for the nonconvex 2D experiment.}
  \label{fig:2d-comp-theta}
\end{figure}

The mesh for the equilibration strategy with $\epsilon = 0.01$ and $\theta = 0.25$ at various stages of refinement is shown in Figure~\ref{fig:equilib-2d-01-25-meshes}. The top left corner shows the initial mesh, the top right after 1 iteration of refinement, the bottom left after 5 iterations, and the bottom right after 10 iterations. As expected, the error estimator is able to identify the non-convex corners of the domain. Perhaps unexpectedly, the estimator suggests further refinement along the no-flow boundary of the middle (Darcy) region. The flow for the final iteration of this strategy is visualized in Figure~\ref{fig:equilib-2d-01-25-flow}. The domain is colored according to $K^{-1}$ on a log scale to accentuate the difference in permeabilities between the three regions. The flow is visualized as streamlines flowing left to right colored by the pressure. Note that the pressure decreases as the flow moves from the left end of the domain to the right. Also note that the flow tends to the higher permeability Stokes and lower Darcy regions.

\begin{figure}[h!]
  \centering
  \begin{tabular}{cc}
    \includegraphics[width=4.5cm]{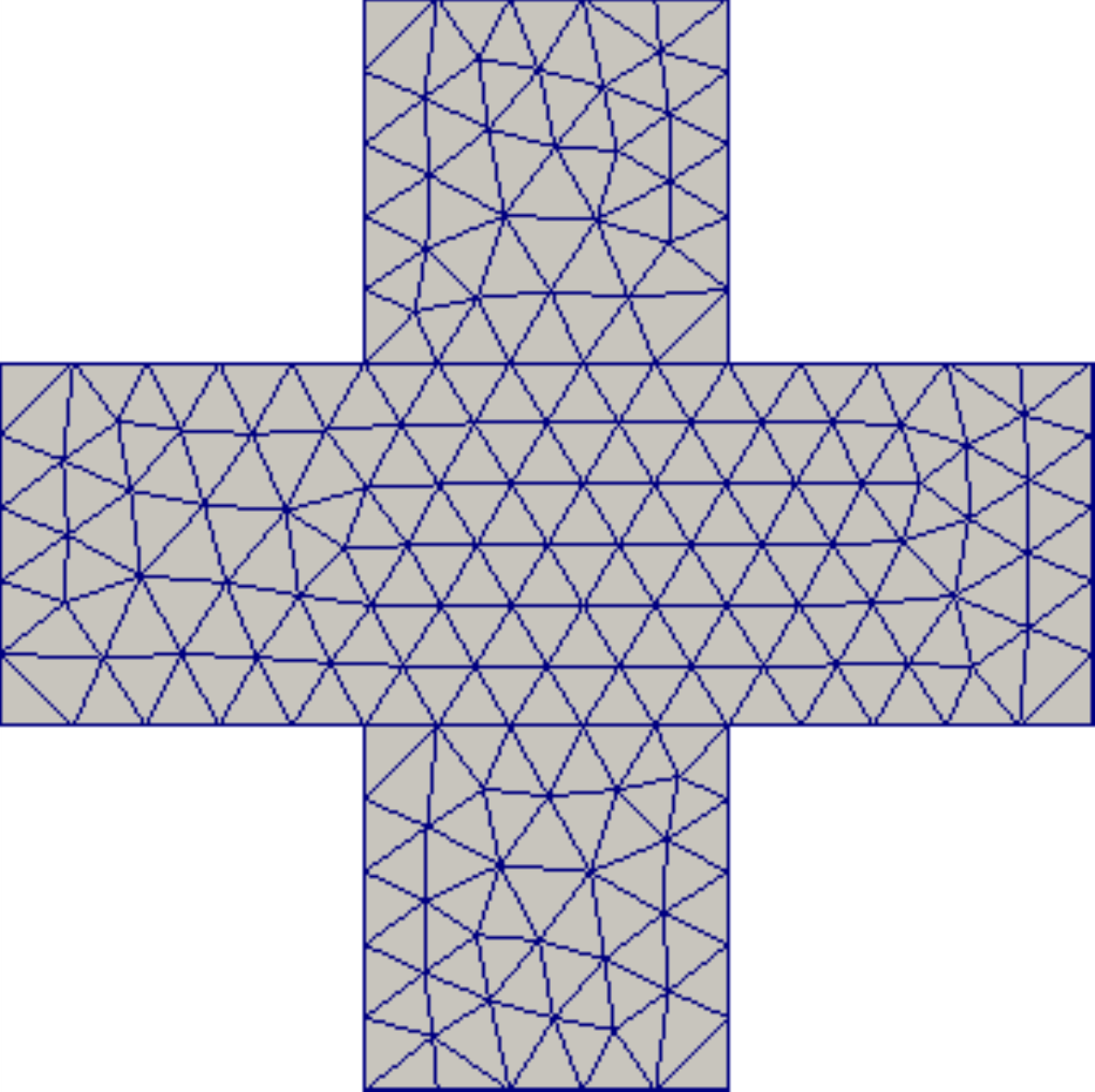} & \includegraphics[width=4.5cm]{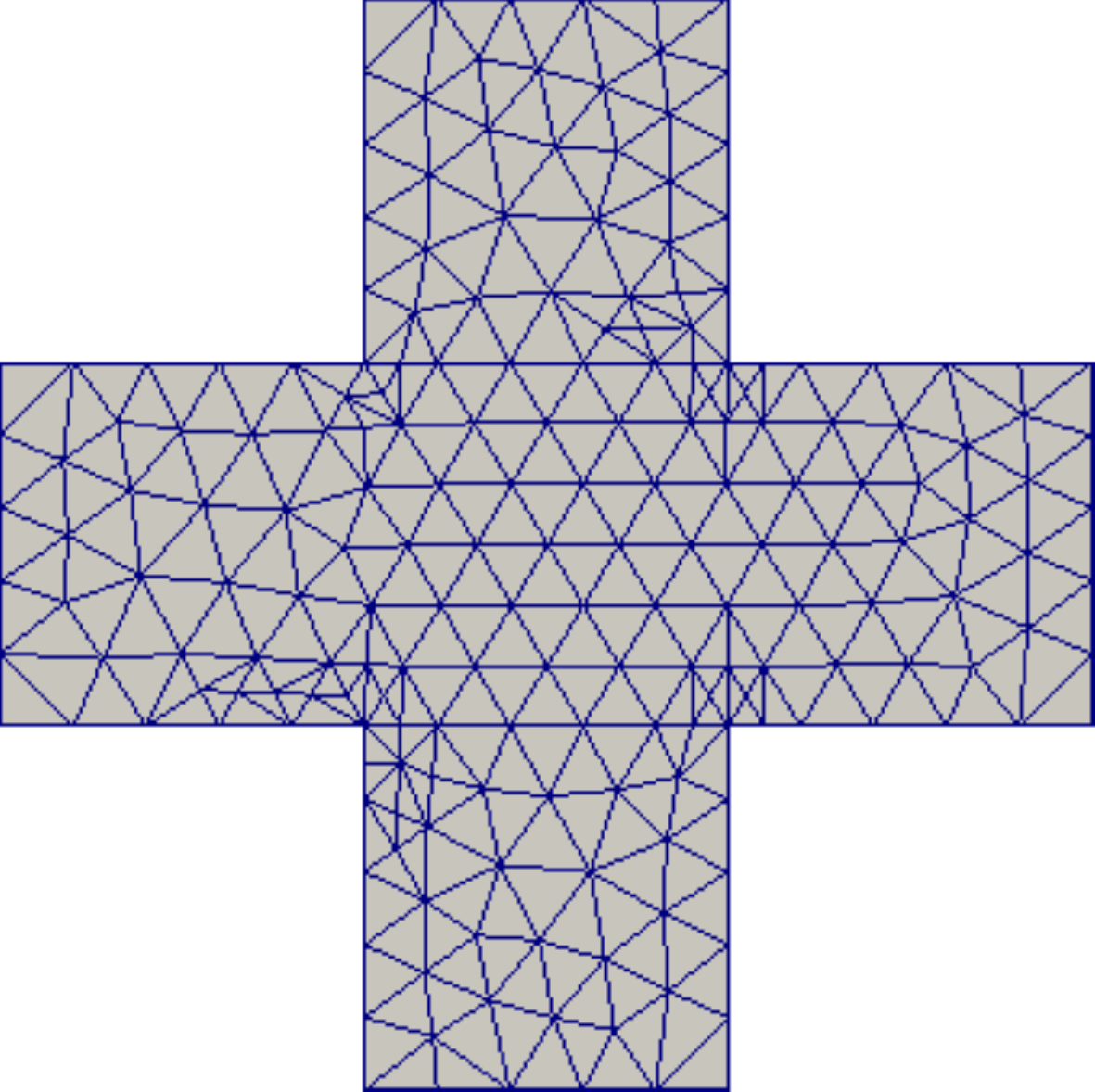} \\
    \includegraphics[width=4.5cm]{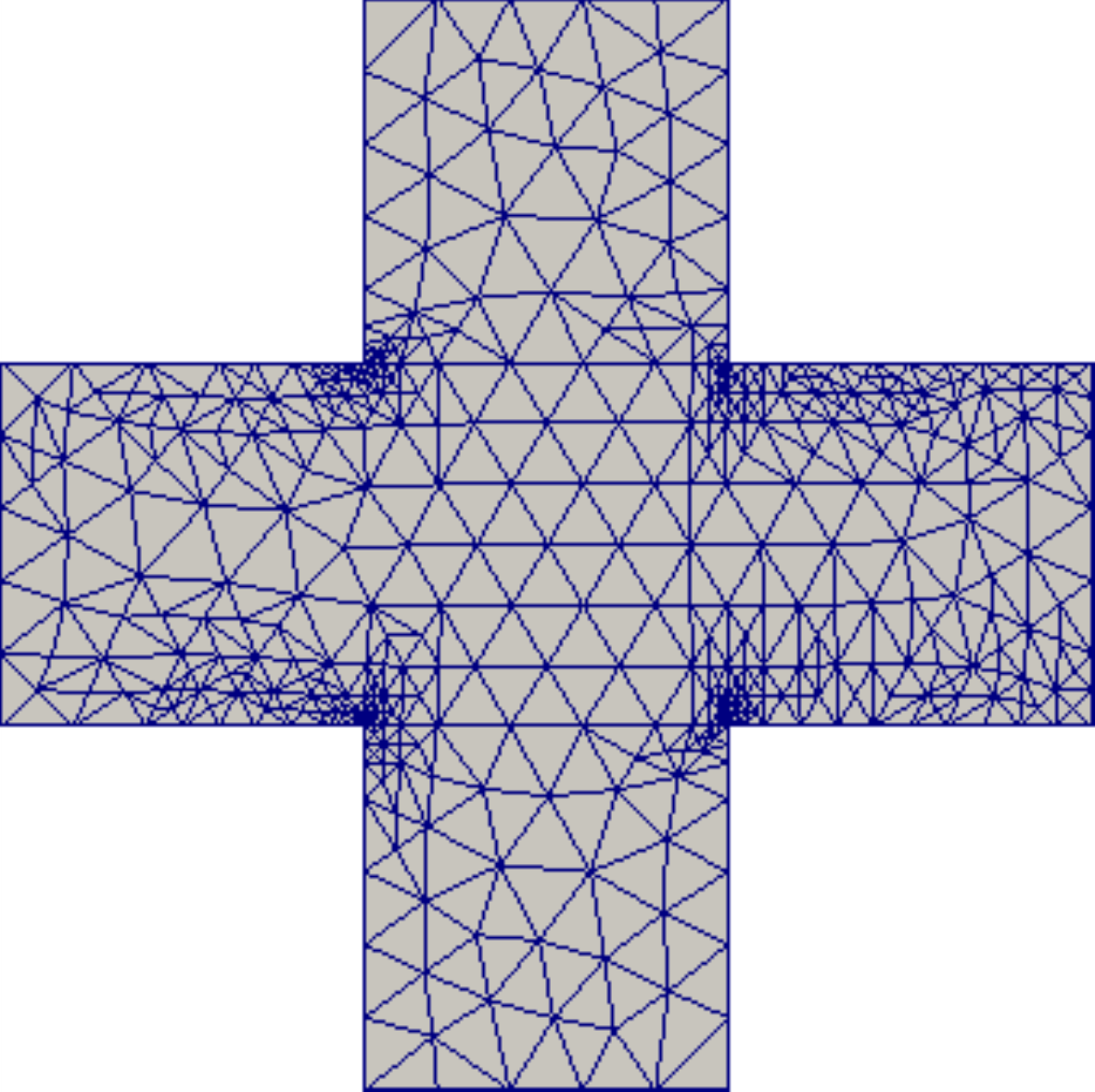} & \includegraphics[width=4.5cm]{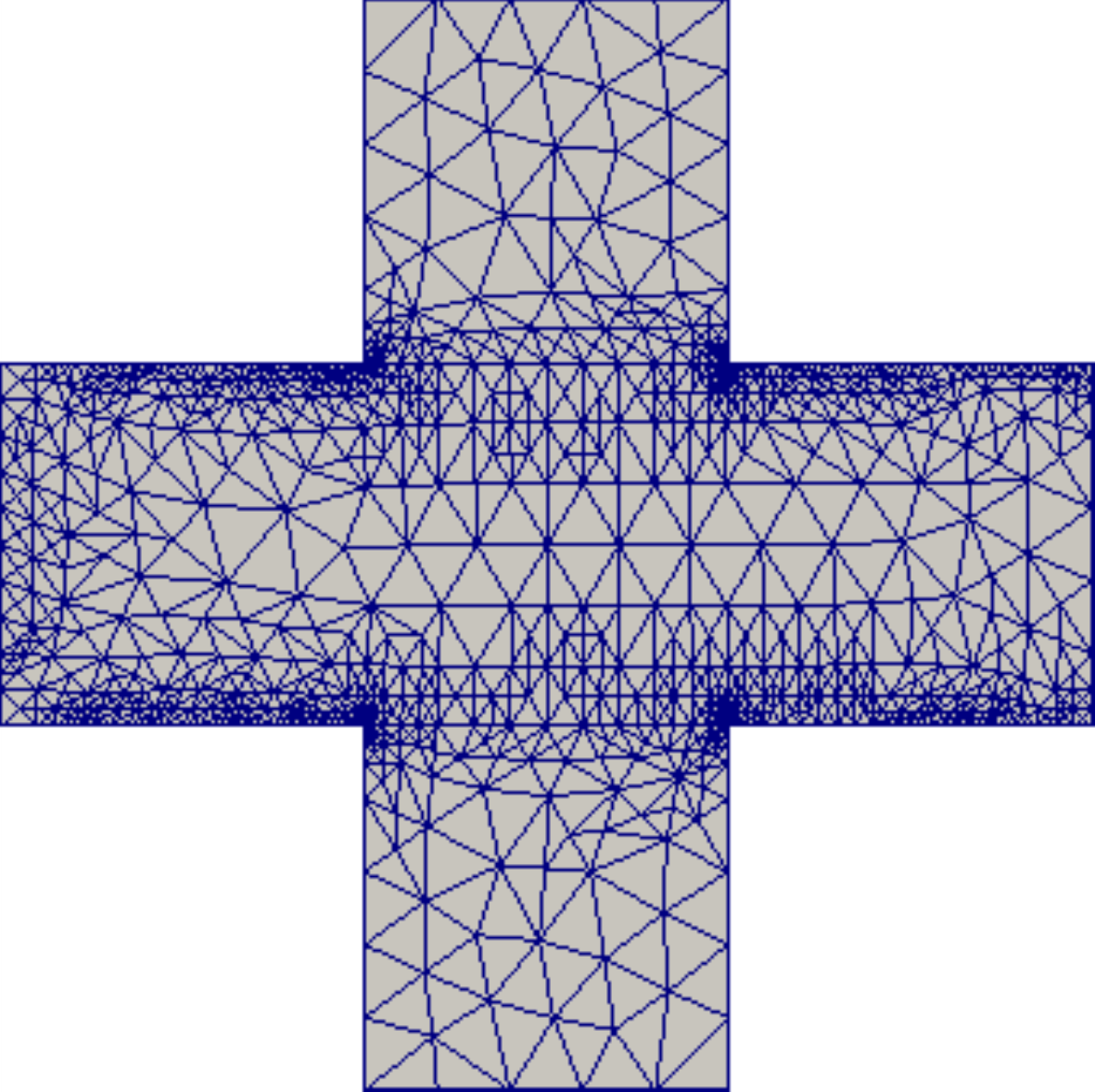} \\
  \end{tabular}
  \caption{Meshes at various stages of refinement for the nonconvex 2D problem using an equilibration strategy with $\epsilon = 0.01$ and $\theta = 0.25$. The top left corner shows the initial mesh, the top right after 1 iteration of refinement, the bottom left after 5 iterations, and the bottom right after 10 iterations.}
  \label{fig:equilib-2d-01-25-meshes}
\end{figure}

\begin{figure}[h!]
  \centering
  \includegraphics[width=18.5cm]{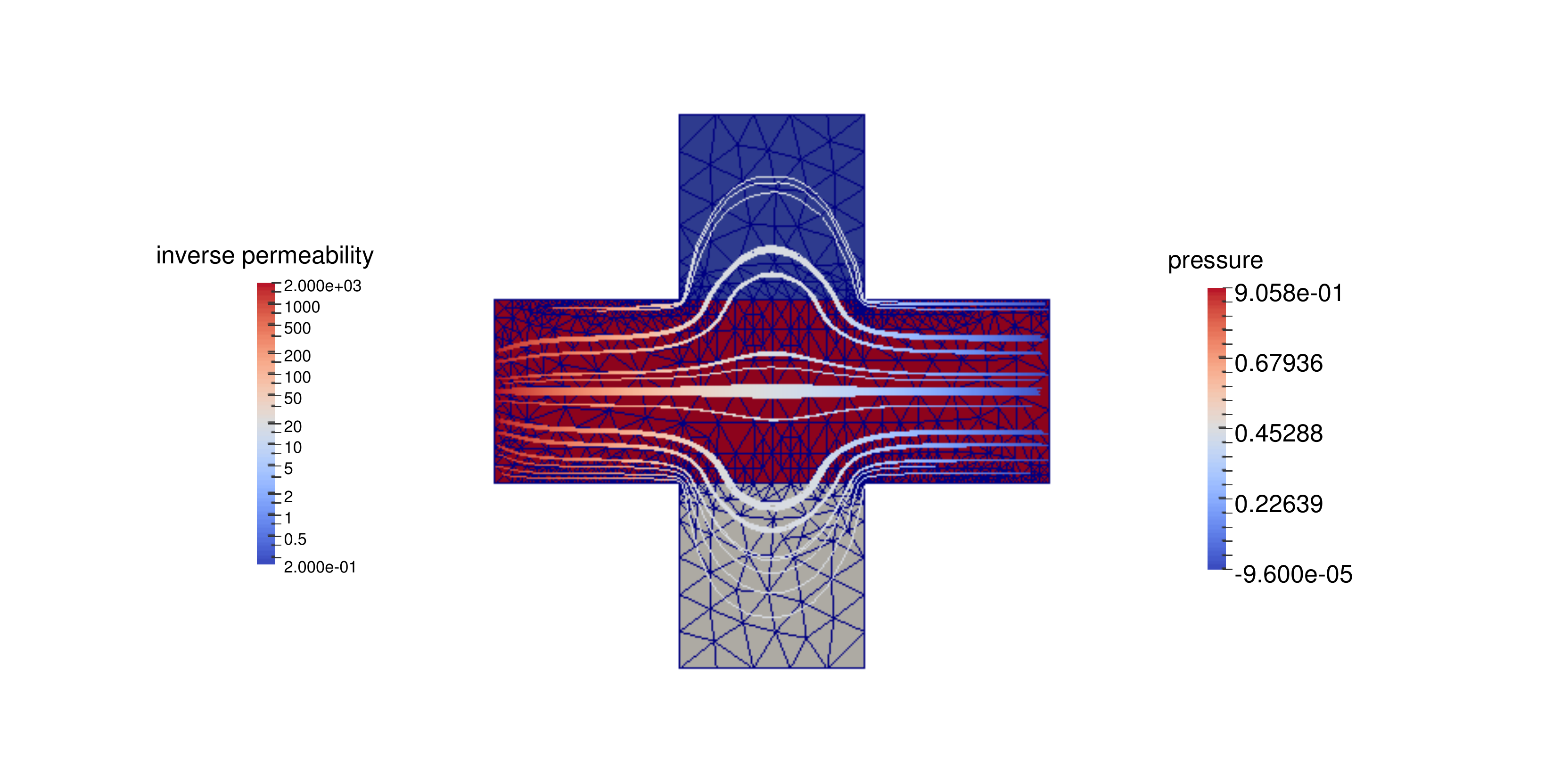}
  \caption{Visualization of the flow for the 2D nonconvex problem using an equilibration strategy with $\epsilon = 0.01$ and $\theta = 0.25$ after the final iteration of refinement. The domain is colored according to $K^{-1}$ on a log scale to accentuate the difference in permeabilities between the three regions. The flow is visualized as streamlines flowing left to right colored by the pressure.}
  \label{fig:equilib-2d-01-25-flow}
\end{figure}

\subsection{2D obstacle experiment}

For the second 2D experiment, we consider the square domain depicted in Figure~\ref{fig:domain-obstacle}. For the boundary conditions, we use a discontinuous inflow with a constant velocity of $[1/4, 0]^T$ and a do-nothing outflow. In the center of the domain is an obstacle through which no flow is possible. Throughout most of the domain there is porous material with permeability tensor $K = 5 \cdot 10^{-4} I$. Near the obstacle, however, are Stokes and Darcy regions with permeability tensors satisfying $K^{-1} = 0$ and $K = 5 \cdot 10^{-2} I$, respectively. As with the previous experiment, we use a constant viscosity of $\mu = \mu^*= 10^{-3}$. We used zero right-hand sides $\vec{f} = 0$ and $g = 0$ throughout the domain.

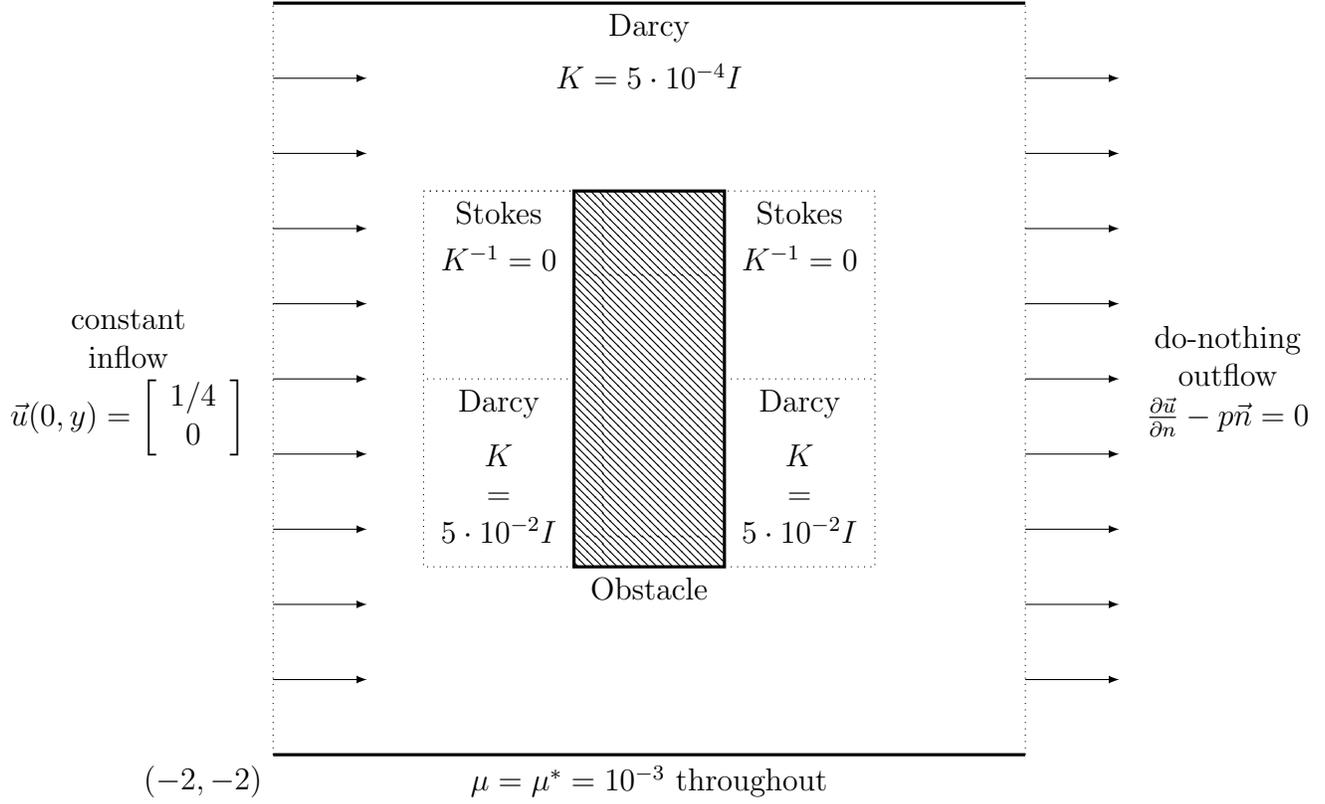
\begin{figure}[h!]
  \centering
  \begin{tikzpicture}[scale=2.5,>=latex]
    \draw[very thick] (-2,-2) -- ( 2,-2);
    \draw[very thick] (-2, 2) -- ( 2, 2);
    \draw[dotted] (-2,-2) -- (-2, 2);
    \draw[dotted] ( 2,-2) -- ( 2, 2);
    \draw[very thick, pattern=north west lines] (-0.4,-1) rectangle (0.4, 1);
    \draw[dotted] (-0.4,-1) -- (-1.2,-1) -- (-1.2, 1) -- (-0.4, 1);
    \draw[dotted] ( 0.4,-1) -- ( 1.2,-1) -- ( 1.2, 1) -- (-1.2, 1);
    \draw[dotted] (-0.4, 0) -- (-1.2, 0);
    \draw[dotted] ( 0.4, 0) -- ( 1.2, 0);

    \node[anchor=north] at (0,-1) (obstacle) {Obstacle};
    \node[anchor=north] at (0,2) (darcy1) {Darcy};
    \node[anchor=north] at (darcy1.south) {$K = 5 \cdot 10^{-4} I$};
    \node[anchor=north] at (-0.8, 1) (stokes1) {Stokes};
    \node[anchor=north] at (stokes1.south) {$K^{-1} = 0$};
    \node[anchor=north] at ( 0.8, 1) (stokes2) {Stokes};
    \node[anchor=north] at (stokes2.south) {$K^{-1} = 0$};
    \node[anchor=north] at (-0.8, 0) (darcy2) {Darcy};
    \node[anchor=north] at (darcy2.south) {$\begin{array}{c}
      K \\
      = \\
      5 \cdot 10^{-2} I
    \end{array}$};
    \node[anchor=north] at ( 0.8, 0) (darcy3) {Darcy};
    \node[anchor=north] at (darcy3.south) {$\begin{array}{c}
      K \\
      = \\
      5 \cdot 10^{-2} I
    \end{array}$};

    \draw[->] (-2,1.6) -- (-1.5,1.6);
    \draw[->] (-2,1.2) -- (-1.5,1.2);
    \draw[->] (-2,0.8) -- (-1.5,0.8);
    \draw[->] (-2,0.4) -- (-1.5,0.4);
    \draw[->] (-2,0) -- (-1.5,0);
    \draw[->] (-2,-0.4) -- (-1.5,-0.4);
    \draw[->] (-2,-0.8) -- (-1.5,-0.8);
    \draw[->] (-2,-1.2) -- (-1.5,-1.2);
    \draw[->] (-2,-1.6) -- (-1.5,-1.6);
    \node[anchor=east] at (-2,0) {\begin{tabular}{c}
      constant \\
      inflow \\
      $\vec{u}(0,y) = \left[\begin{array}{c}
        1/4 \\
        0
      \end{array} \right]$
    \end{tabular}};
    \draw[->] (2,1.6) -- (2.5,1.6);
    \draw[->] (2,1.2) -- (2.5,1.2);
    \draw[->] (2,0.8) -- (2.5,0.8);
    \draw[->] (2,0.4) -- (2.5,0.4);
    \draw[->] (2,0) -- (2.5,0);
    \draw[->] (2,-0.4) -- (2.5,-0.4);
    \draw[->] (2,-0.8) -- (2.5,-0.8);
    \draw[->] (2,-1.2) -- (2.5,-1.2);
    \draw[->] (2,-1.6) -- (2.5,-1.6);
    \node[anchor=west] at (2.5,0) {\begin{tabular}{c}
        do-nothing \\
        outflow \\
        $\frac{\partial \vec{u}}{\partial n} - p\vec{n} = 0$
    \end{tabular} };

  \node[anchor=north] at (0,-2) {$\mu = \mu^* = 10^{-3}$ throughout};

  \node[anchor=north east] at (-2,-2) {$(-2,-2)$};
  \node[anchor=south west] at (2,2) {$(2,2)$};
  \end{tikzpicture}
  \caption{Domain in 2D with obstacle used for numerical experiments.}
  \label{fig:domain-obstacle}
\end{figure}

As with the previous experiment, we tested the maximum and equilibration strategies with the same combinations of $(\epsilon,\theta) \in \{0,0.001,0.01\} \times \{0.25,0.5,0.75\}$. There were 3978 degrees of freedom in the initial mesh. For each choice of adaptive strategy, we refined the mesh 10 times, and, for comparison, we uniformly refined the mesh 5 times.

Figure~\ref{fig:2d-obstacle-vs-uniform} compares the degrees of freedom against the computed error estimate for each experiment, in a manner similar to Figure~\ref{fig:2d-adapt-vs-uniform} from the previous section. Note that the results are similar to the previous experiment with all adaptive strategies outperforming uniform refinement and very little differences in performance aside from the growth in degrees of freedom.

\begin{figure}[h!]
  \centering
  \includegraphics[width=18cm]{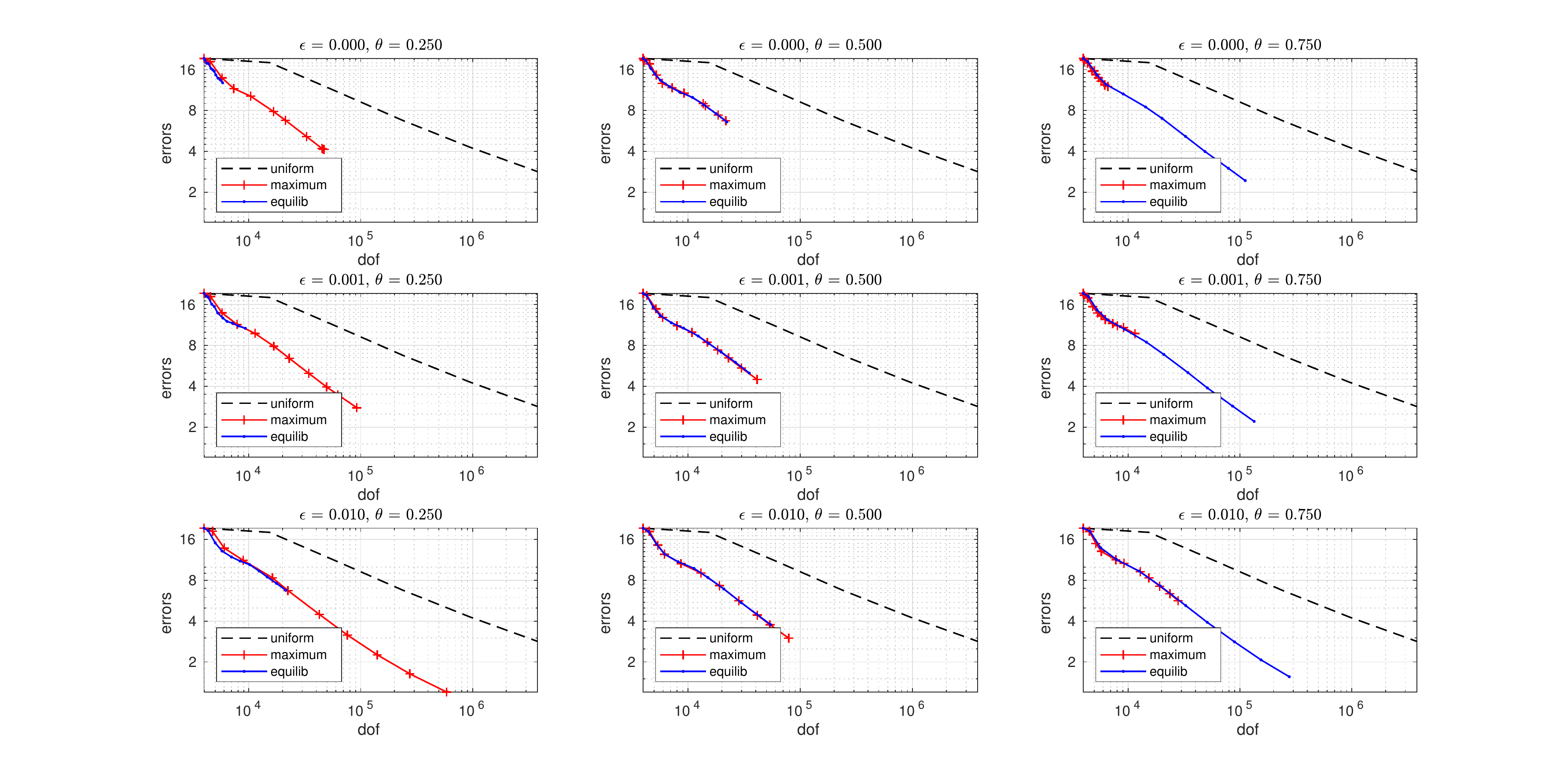}
  \caption{Comparison of each adaptive mesh refinement strategy against uniform refinement for the 2D obstacle experiment.}
  \label{fig:2d-obstacle-vs-uniform}
\end{figure}

Figure~\ref{fig:equilib-obstacle-01-25-meshes} presents the meshes for various levels of refinement using an equilibration strategy with $\epsilon = 0.01$ and $\theta = 0.25$. The top left corner displays the initial mesh used. The mesh after a single step of refinement is depicted in the top right corner. Note that the only refinement was performed at the corners of the obstacle, where most of the error was centered. The mesh in the lower left corner is after 5 refinement iterations. At this point, the errors at the corners of the obstacle were reduced sufficiently so that errors in other parts of the domain, notably the boundary walls at the top and bottom of the domain along with the interface between subregions near the obstcle corners, could be identified and refined. The lower right picture is of the mesh after the final (10th) iteration of refinement. At this stage, the errors are located in the same regions as before, suggesting a good approximation throughout much of the rest of the domain.

\begin{figure}[h!]
  \centering
  \begin{tabular}{cc}
    \includegraphics[width=4.5cm]{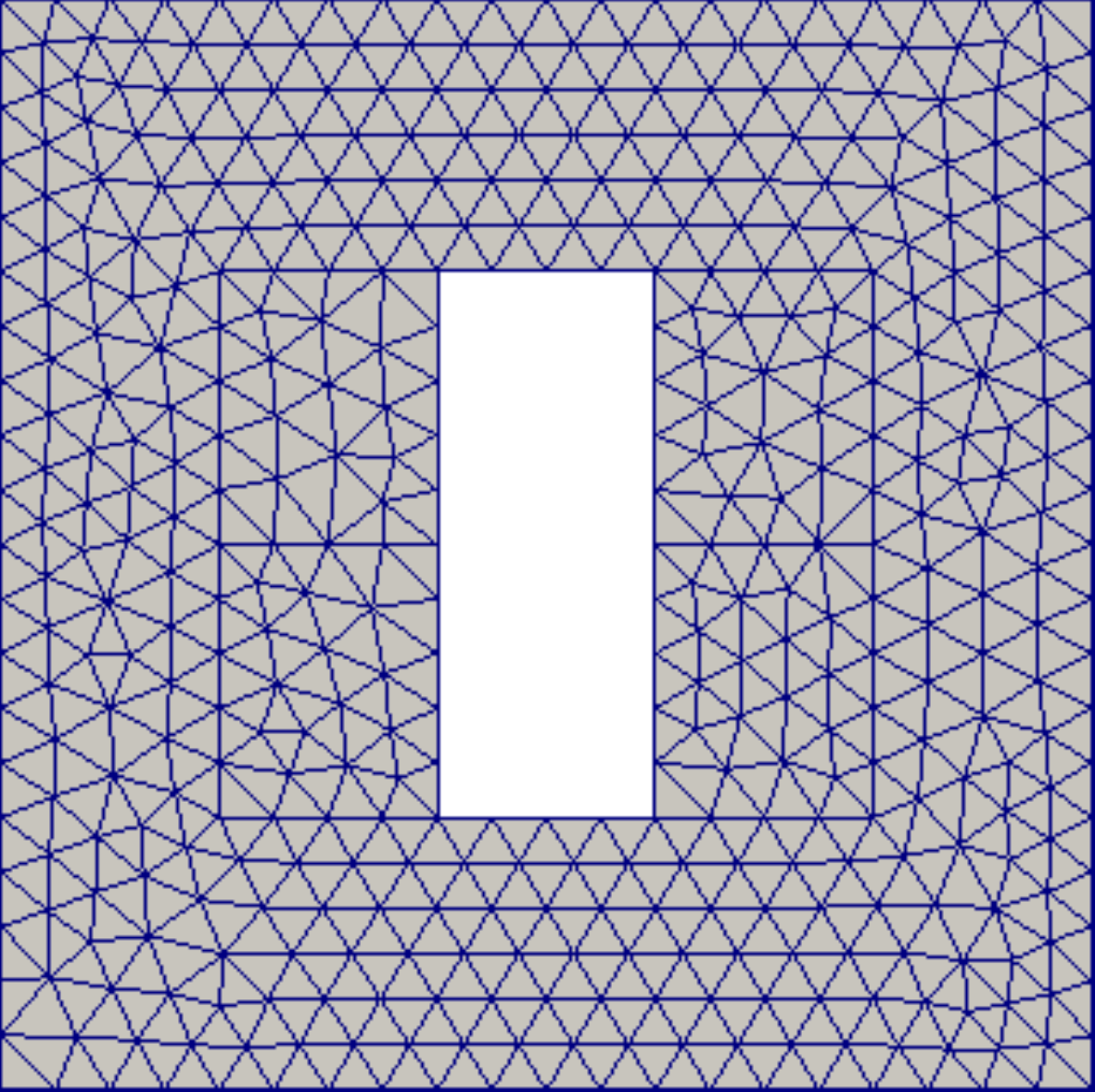} & \includegraphics[width=4.5cm]{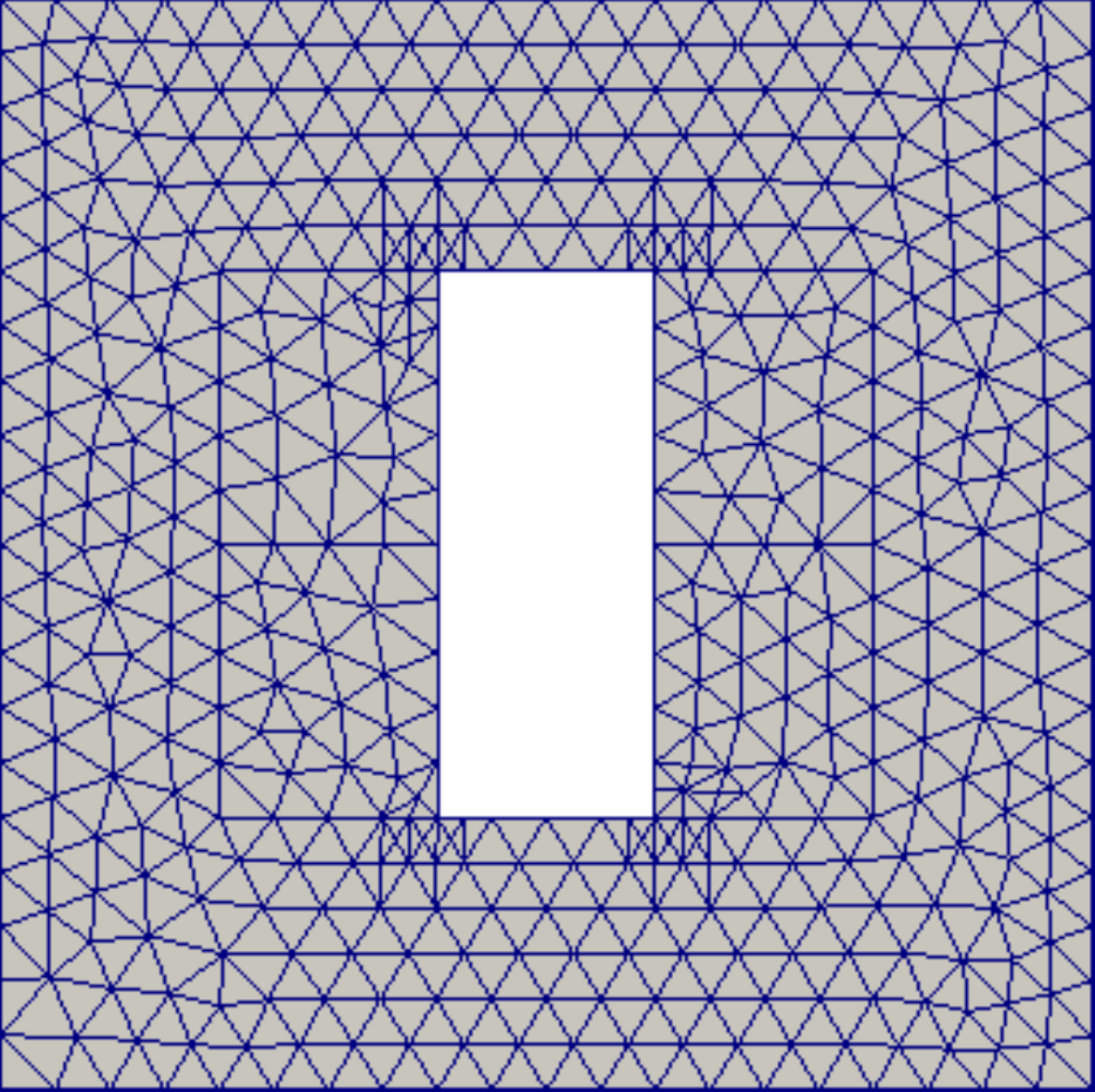} \\
    \includegraphics[width=4.5cm]{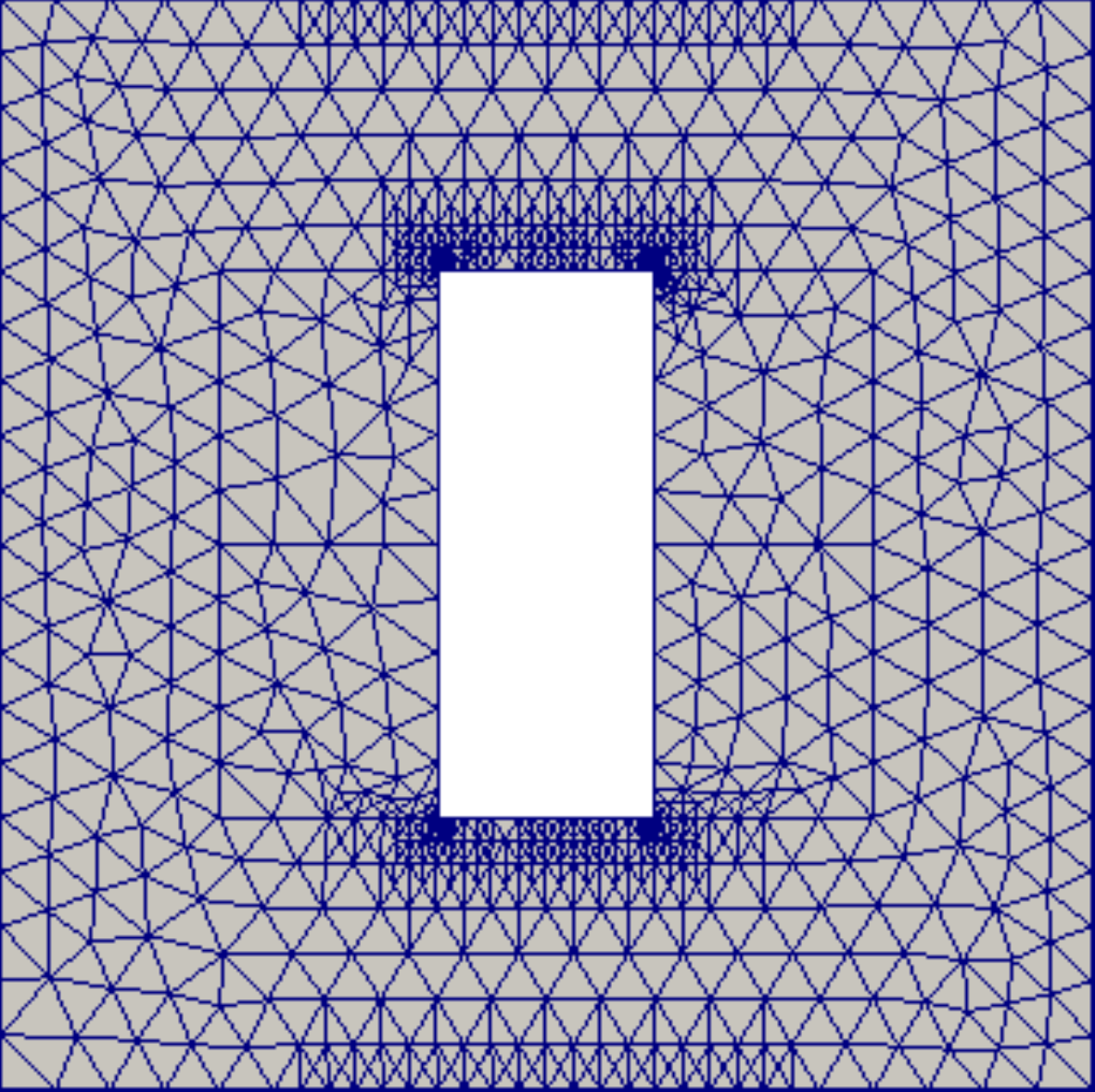} & \includegraphics[width=4.5cm]{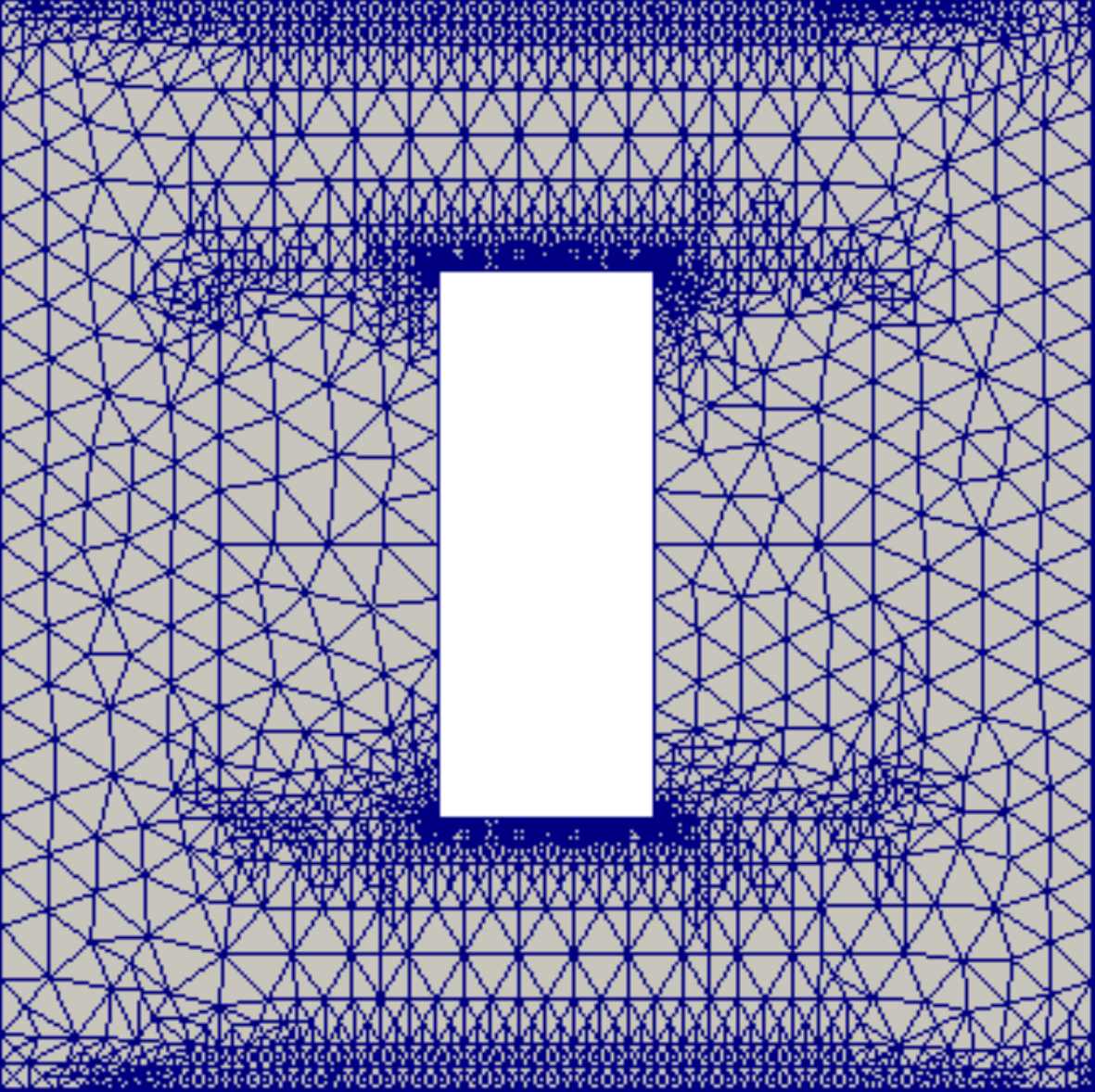} \\
  \end{tabular}
  \caption{Meshes at various stages of refinement for the obstacle problem using an equilibration strategy with $\epsilon = 0.01$ and $\theta = 0.25$. The top left corner shows the initial mesh, the top right after 1 iteration of refinement, the bottom left after 5 iterations, and the bottom right after 10 iterations.}
  \label{fig:equilib-obstacle-01-25-meshes}
\end{figure}

Figure~\ref{fig:equilib-obstacle-01-25-flow} depicts the flow from the solution to the final (10th) iteration of an equilibration strategy with $\epsilon = 0.01$ and $\theta = 0.25$. As expected, the pressure decreases from inflow to outflow, and the flow avoids the obstacle in the center of the domain. The subregions are colored according to the inverse permeability on a log scale with the Stokes (free-flow) region in blue, the high permeability Darcy region in gray, and the low permeability Darcy region in red. As we would expect, the flow tends to the regions of higher permeability, as evidenced by the bulges at the corners of these regions. However, the flow appears to quickly leave the regions due to encountering the obstacle.

\begin{figure}[h!]
  \centering
  \includegraphics[width=17cm]{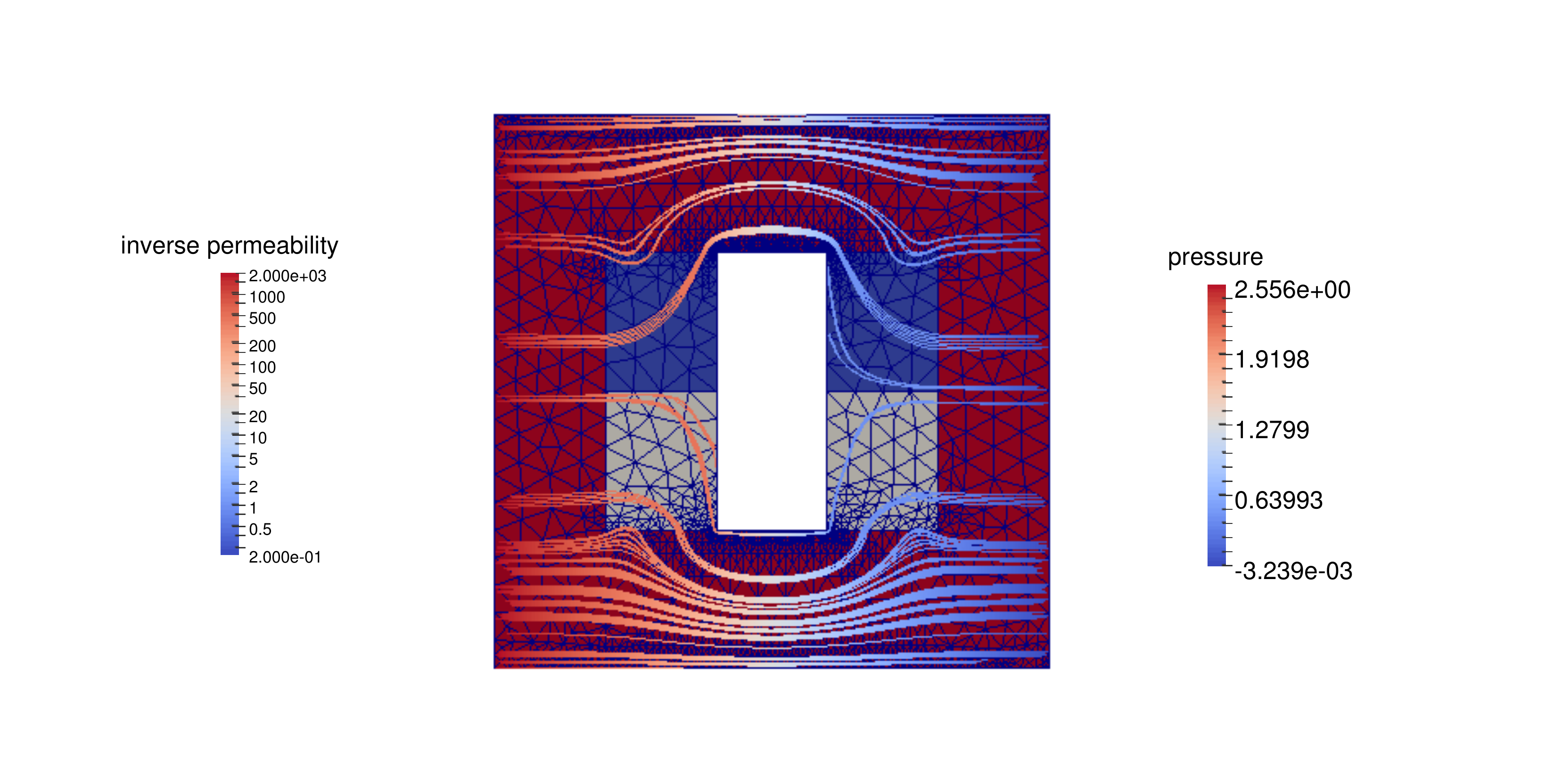}
  \caption{Visualization of the flow for the 2D obstacle problem using an equilibration strategy with $\epsilon = 0.01$ and $\theta = 0.25$ after the final iteration of refinement. The domain is colored according to $K^{-1}$ on a log scale to accentuate the difference in permeabilities between the three regions. The flow is visualized as streamlines flowing left to right colored by the pressure.}
  \label{fig:equilib-obstacle-01-25-flow}
\end{figure}

\subsection{3D experiment}

We also ran a series of 3D experiments. For this experiment, we used the nonconvex domain depicted in Figure~\ref{fig:3d-domain}, which provides a front and back view of the domain. The domain is partitioned into eight unit cubes, each cube forming a subregion with a different permeability. Each subregion is colored according to the inverse permeability (smaller values correspond to higher permeabilities). The permeabilities $K$ span several orders of magnitude, ranging from  $5 \cdot 10^{-5}I$ to $500I$ (inverse permeabilities spanning $2 \cdot 10^4I$ to $2 \cdot 10^{-3}I$). The inverse permeabilities are colored on a log scale in the figure to demonstrate the jumps in orders of magnitude. The fluid has a constant viscosity $\mu = \mu^* = 10^{-3}$ throughout.

\begin{figure}[h!]
  \centering
  \begin{tabular}{cc}
    \includegraphics[width=5cm]{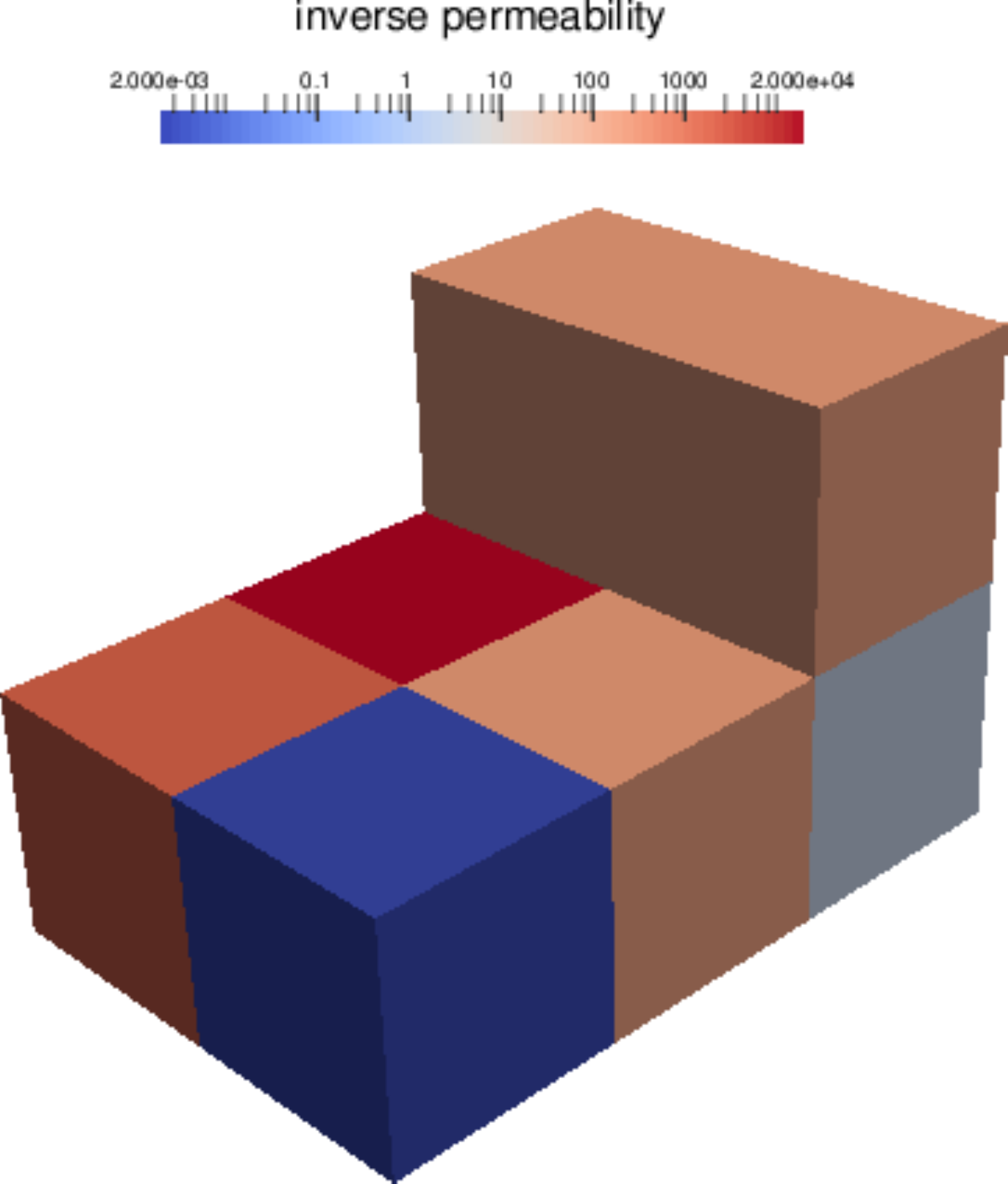} &
    \includegraphics[width=5cm]{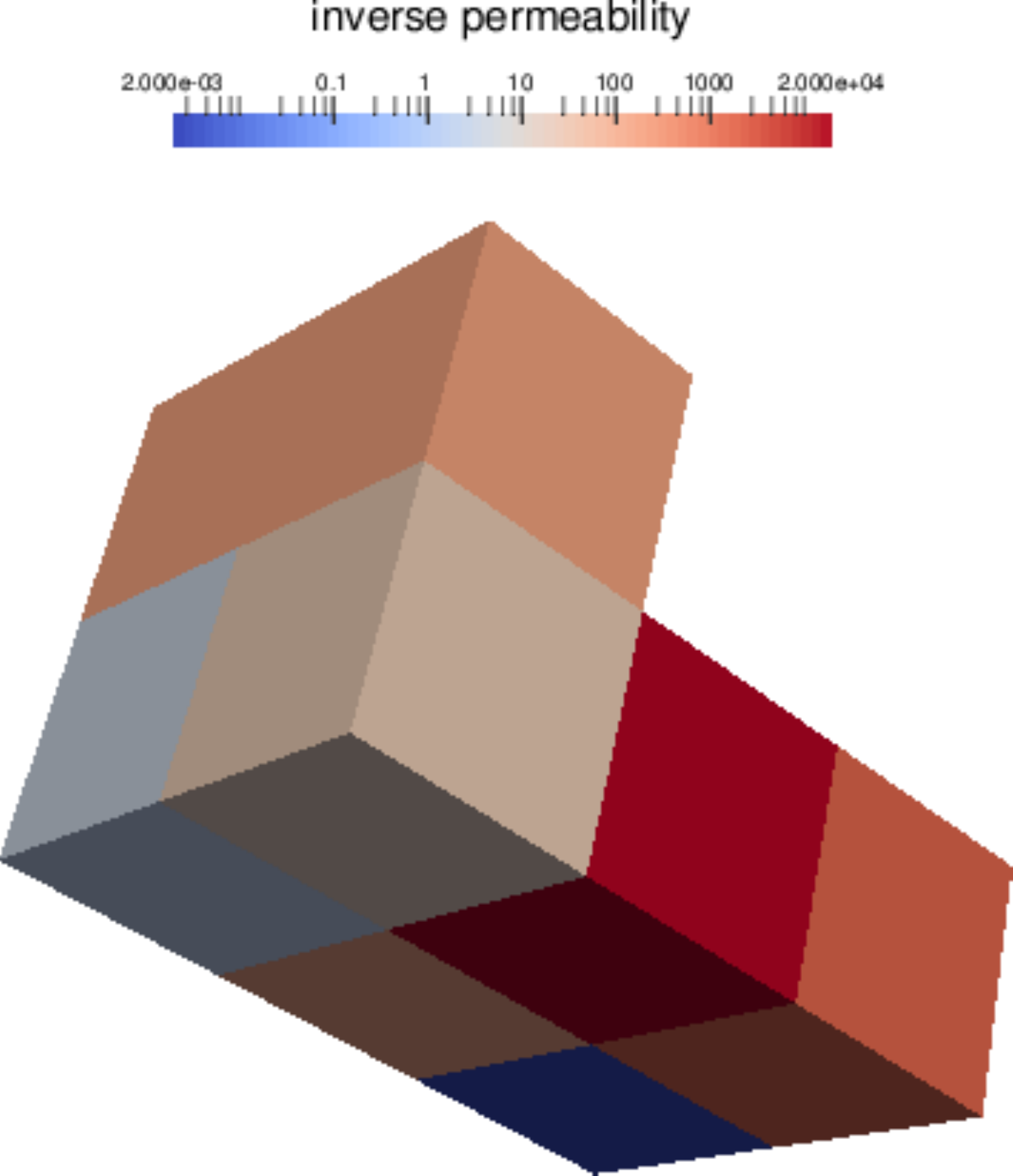}
  \end{tabular}
  \caption{3D nonconvex domain with subregions colored according to inverse permeability (smaller values correspond to higher permeabilities). The permeabilities range from $5 \cdot 10^{-5}I$ to $500I$ (inverse permeabilities ranging from $2 \cdot 10^4I$ to $2 \cdot 10^{-3}I$) and are colored on a log scale to demonstrate the jumps in order of magnitude.}
  \label{fig:3d-domain}
\end{figure}

A 2-dimensional slice of the domain is depicted in Figure~\ref{fig:3d-domain-slice} in order to explain the choice of boundary conditions. All faces of the domain, except for those corresponding to the dotted edges in the figure, have no-flow boundary conditions $\vec{u} = 0$. We define do-nothing boundary conditions on the outflow and a parabolic profile on inflow given independently in the two spatial dimensions with maximal velocity 1 in the center of the face and decreasing quadratically to zero on the walls. The right-hand sides $\vec{f}$ and $g$ are defined to be zero throughout the domain.

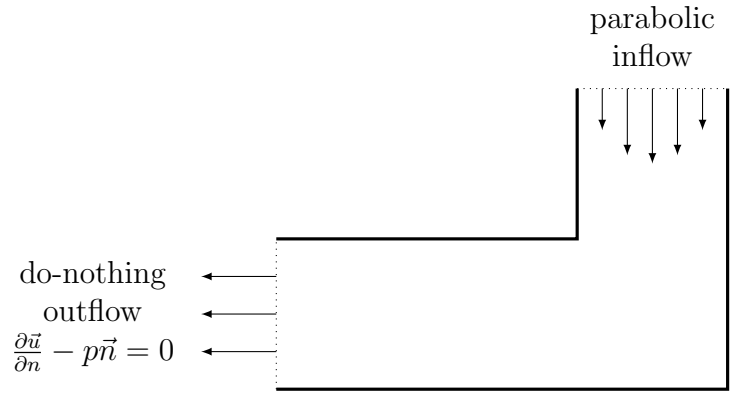
\begin{figure}[h!]
  \centering
  \begin{tikzpicture}[scale=2.0,>=latex]
    \draw[very thick] (0,0) -- (3,0) -- (3,2);
    \draw[very thick] (0,1) -- (2,1) -- (2,2);
    \draw[dotted] (0,0) -- (0,1);
    \draw[dotted] (2,2) -- (3,2);

    \draw[->] (0,0.25) -- (-0.5,0.25);
    \draw[->] (0,0.5) -- (-0.5,0.5);
    \draw[->] (0,0.75) -- (-0.5,0.75);
    \node[anchor=east] at (-0.5,0.5) {\begin{tabular}{c}
        do-nothing \\
        outflow \\
        $\frac{\partial \vec{u}}{\partial n} - p\vec{n} = 0$
    \end{tabular} };

    \draw[->] (2.166667,2) -- (2.166667,1.722222);
    \draw[->] (2.333333,2) -- (2.333333,1.555556);
    \draw[->] (2.500000,2) -- (2.500000,1.500000);
    \draw[->] (2.666667,2) -- (2.666667,1.555556);
    \draw[->] (2.833333,2) -- (2.833333,1.722222);
    \node[anchor=south] at (2.5,2) {\begin{tabular}{c}
        parabolic \\
        inflow
    \end{tabular} };
  \end{tikzpicture}
  \caption{2D slice of the 3D domain, indicating boundary conditions. The parabolic inflow is defined with a maximum velocity magnitude of $1$.}
  \label{fig:3d-domain-slice}
\end{figure}

As with the 2D experiment, we tested the maximum and equilibration strategies with~$(\epsilon,\theta) \in \{0,0.001,0.01\} \times \{0.25,0.5,0.75\}$. There were 2630 degrees of freedom in the initial mesh. For each choice of adaptive strategy, we refined the mesh either a total of 10 times or until the number of degrees of freedom exceeded $10^5$. For comparison, we uniformly refined the mesh 3 times.

Figure~\ref{fig:3d-adapt-vs-uniform} compares the degrees of freedom against the computed error estimates for each experiment in a manner similar to Figure~\ref{fig:2d-adapt-vs-uniform}. The results are similar to the 2D case, showing an improvement in the overall error per degree of freedom for all of the adaptive strategies.

\begin{figure}[h!]
  \centering
  \includegraphics[width=18cm]{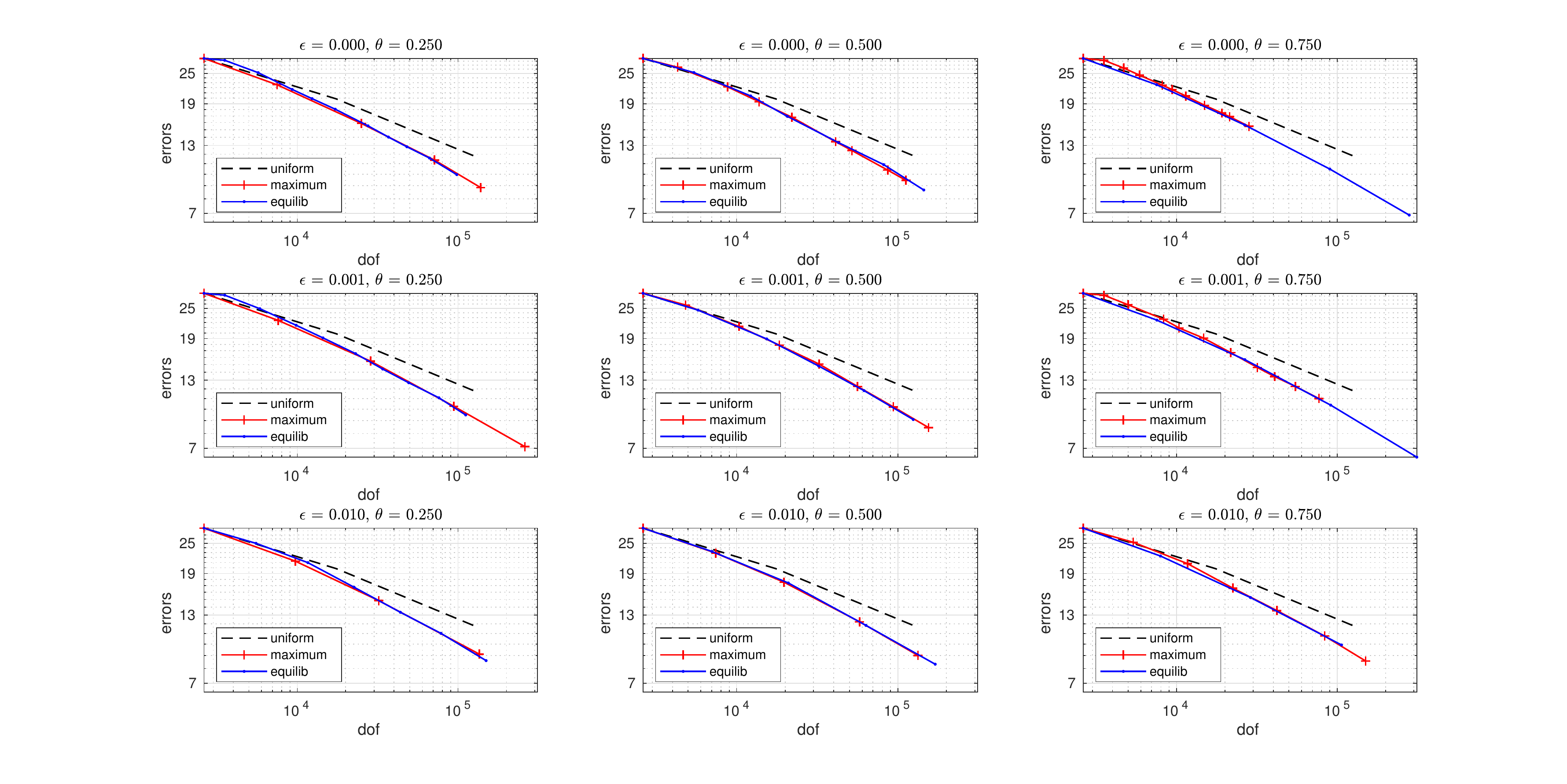}
  \caption{Comparison of each adaptive mesh refinement strategy against uniform refinement for the 3D experiment.}
  \label{fig:3d-adapt-vs-uniform}
\end{figure}

The mesh for the equilibration strategy with $\epsilon = 0.01$ and $\theta = 0.25$ at various levels of refinement is shown in Figure~\ref{fig:equilib-3d-01-25-meshes}. The top left corner shows the initial mesh, the top right after 1 iteration of refinement, the bottom left after 3 iterations, and the bottom right after 6 iterations.

\begin{figure}[h!]
  \centering
  \begin{tabular}{cc}
    \includegraphics[width=6cm]{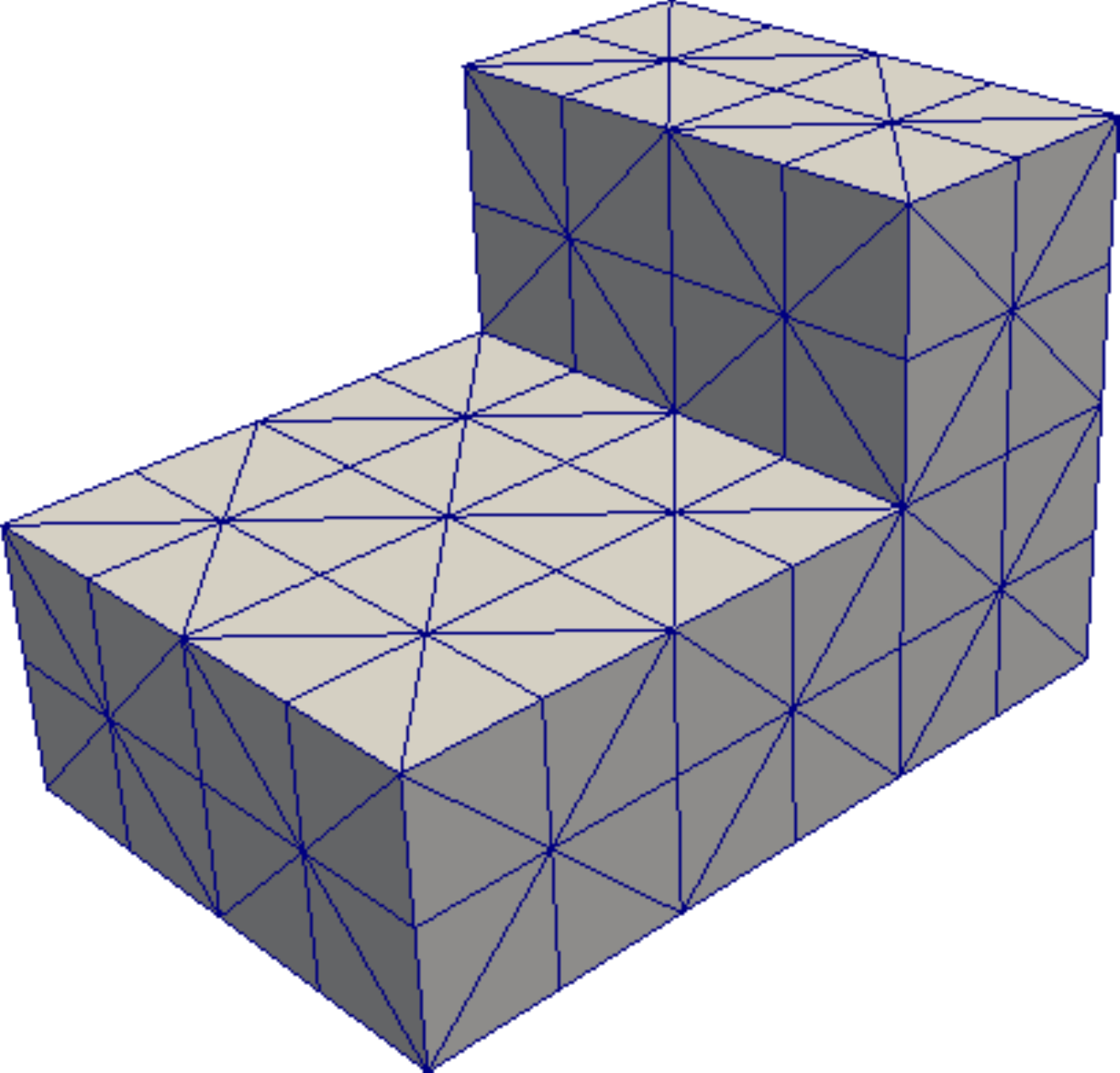} &
    \includegraphics[width=6cm]{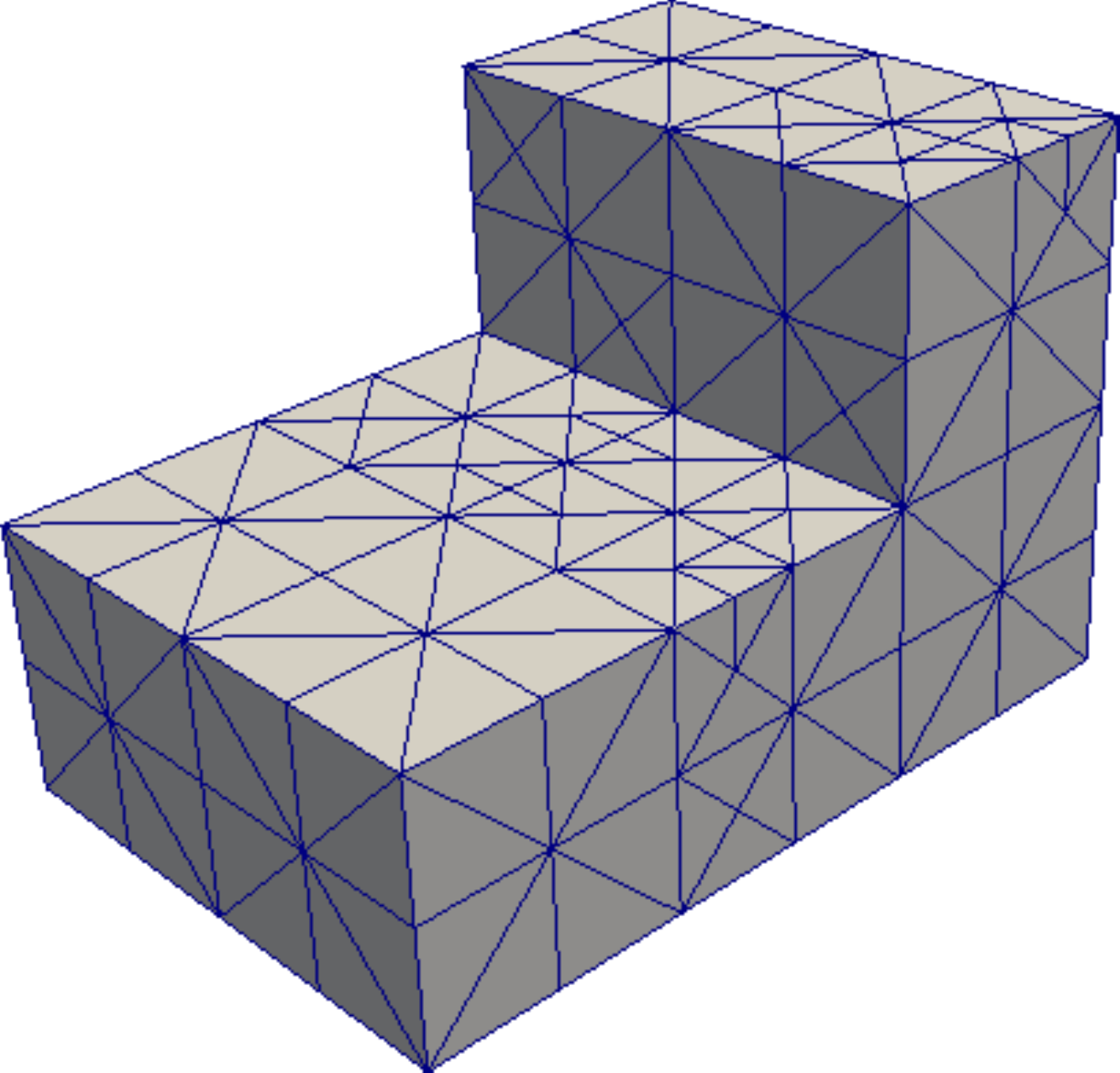} \\
    \includegraphics[width=6cm]{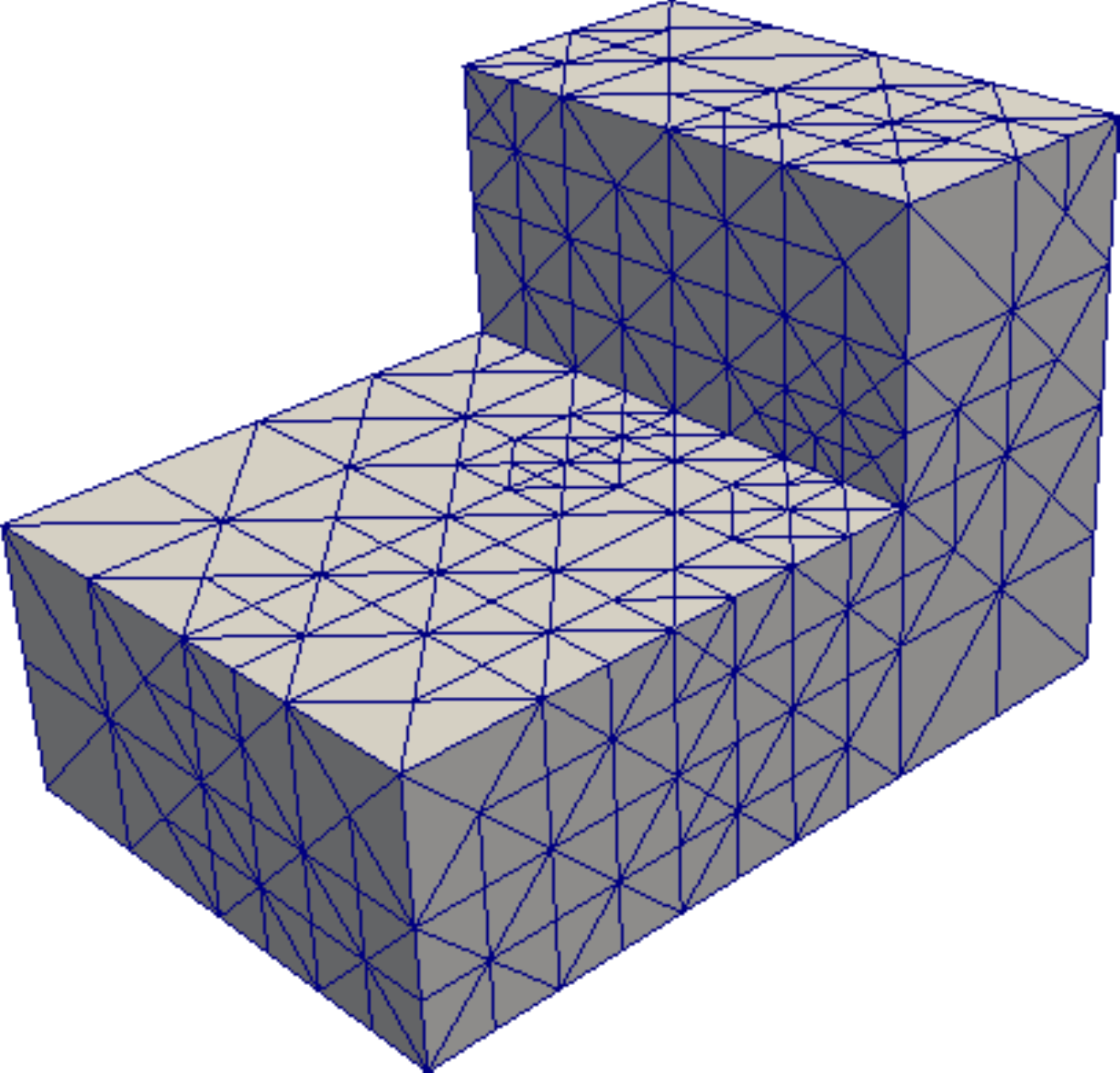} &
    \includegraphics[width=6cm]{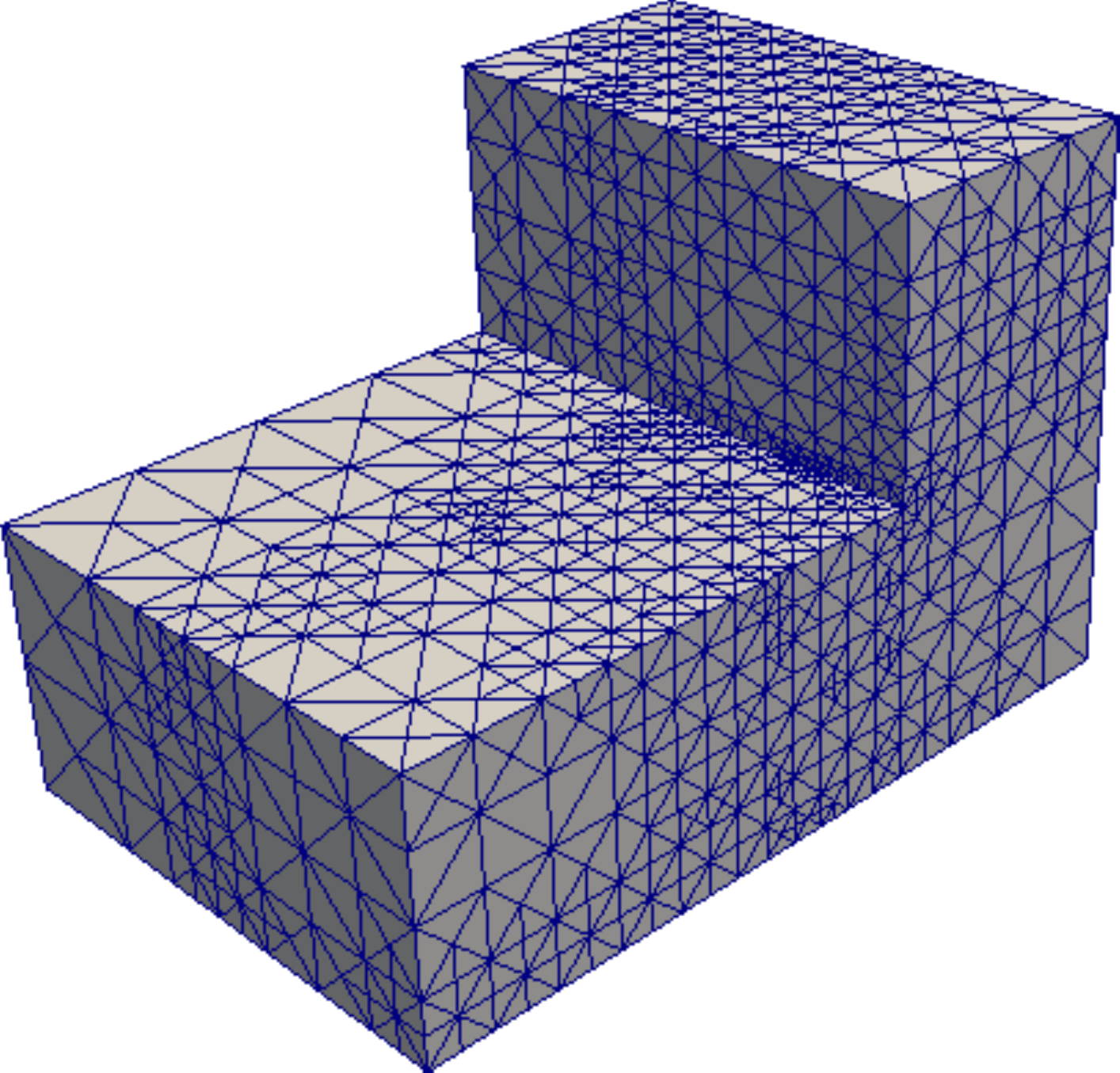}
  \end{tabular}
  \caption{Meshes at various stages of refinement for the 3D equilibration strategy with $\epsilon = 0.01$ and $\theta = 0.25$. The top left corner shows the initial mesh, the top right after 1 iteration of refinement, the bottom left after 3 iterations, and the bottom right after 6 iterations.}
  \label{fig:equilib-3d-01-25-meshes}
\end{figure}

The flow for this experiment is visualized in Figure~\ref{fig:3d-flow}. This visualization used the results from the final iteration of an equilibration strategy with $\epsilon = 0.01$ and $\theta = 0.25$. The domain is colored according to inverse permeability and the flow is visualized as streamlines colored by pressure. 
As expected, the pressure decreases as it travels through the domain. 
Of interest, note that the flow appears to avoid the low permeability (red) subregion of the domain, as it curves to flow into the higher permeability subregion nearby.

\begin{figure}[h!]
  \centering
  \begin{tabular}{cc}
    \includegraphics[width=6.5cm]{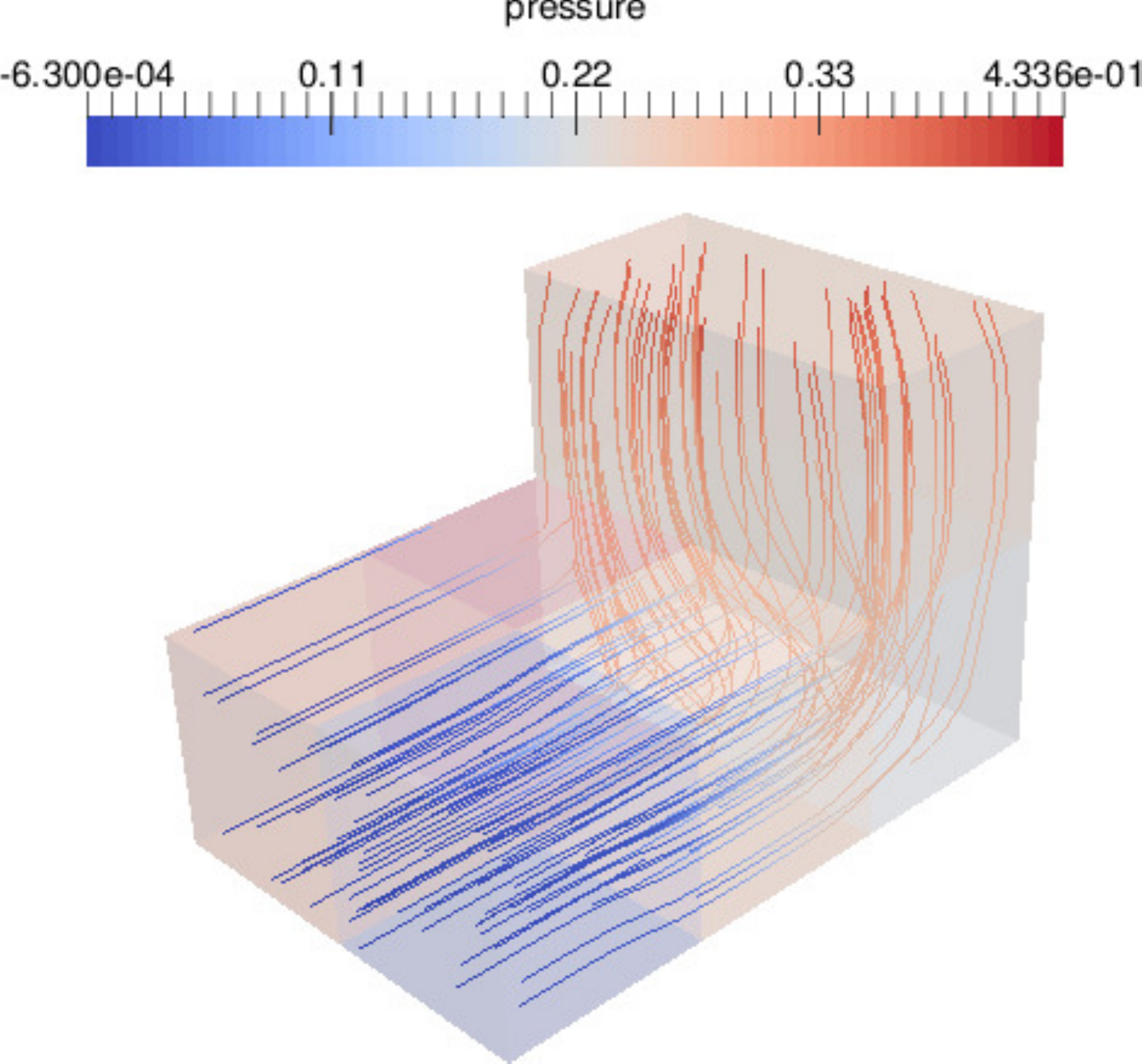} &
    \includegraphics[width=6.1cm]{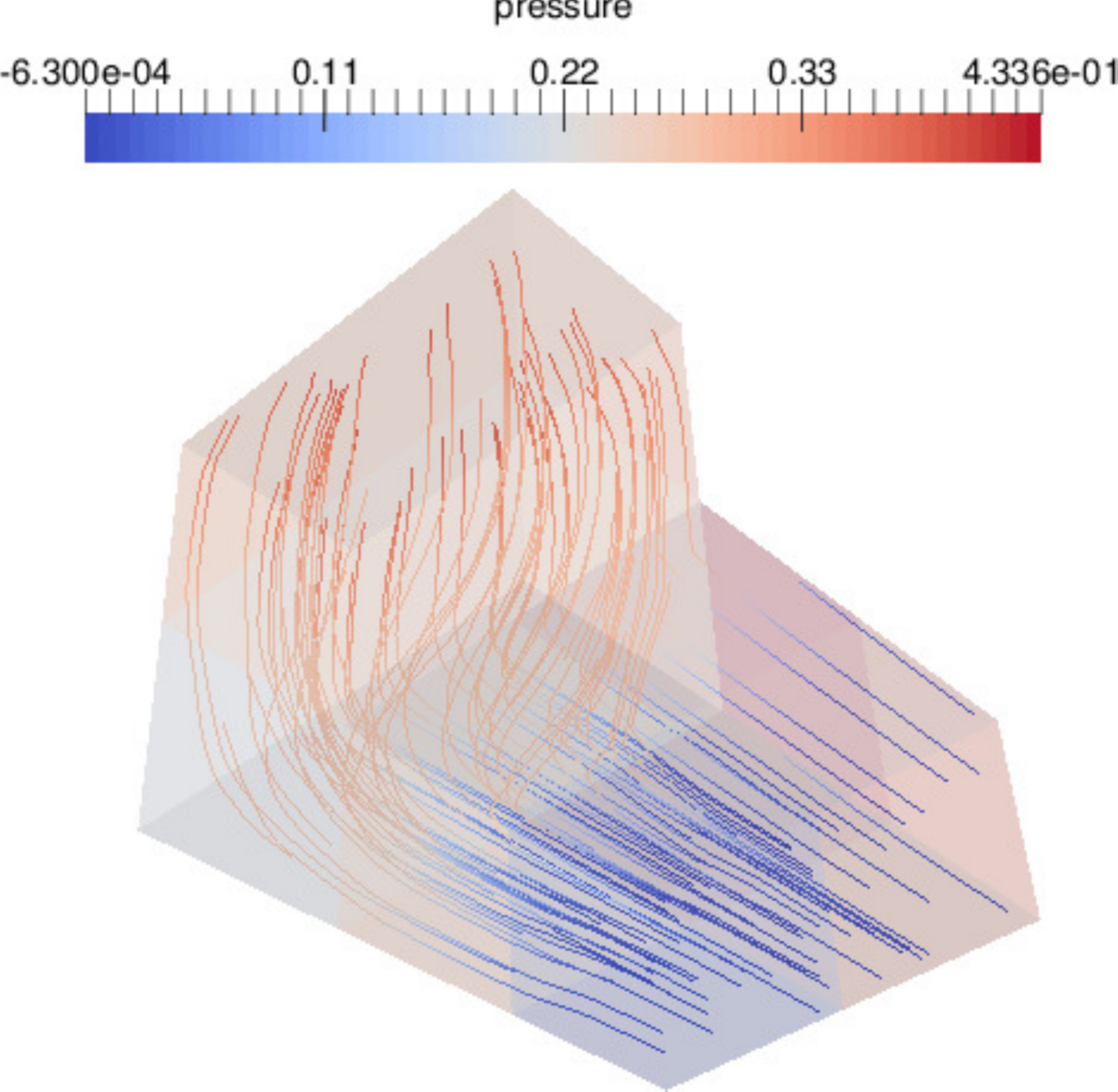}
  \end{tabular}
  \caption{Visualization of flow from final iteration of an equilibration strategy with $\epsilon = 0.01$ and $\theta = 0.25$ for the 3D experiment.}
  \label{fig:3d-flow}
\end{figure}

\section{Conclusion} \label{sec:conclusion}

We extended the error estimate for the Stokes-Brinkman problem developed by Burda and Hasal in~\cite{Burda-2015-AEE} to the 3D domain and to a general right-hand side. We performed numerical experiments in 2D and 3D that showed that the error estimate is effective in driving an adaptive mesh refinement process. The presented error estimate and adaptive mesh refinement strategies will, therefore, be effective in accurately modeling flow using the Stokes-Brinkman equations.

We noticed no substantial difference between the equilibration and maximum refinement strategies, but we noticed that the choice of the parameters $\epsilon$ and $\theta$ have a significant effect on the number of iterations of refinement needed to reach a desired error tolerance. A smaller choice of $\epsilon$ results in fewer elements being refined and therefore more iterations needed to obtain a desired error tolerance. On the other hand, a larger choice of $\epsilon$, while requiring fewer iterations to reach convergence, results in larger problems to be solved. Similar analysis holds for the choice of $\theta$, with a smaller $\theta$ resulting in slower convergence and smaller problems with the equilibration strategy but faster convergence and larger problems with the maximum strategy.

\medskip
\begin{center} {\textbf{Dedication}} \end{center}
{\textit{This article is dedicated to the memory of Prof. Ivo Marek, unforgettable mentor and colleague.}}

\bibliographystyle{plain}
\bibliography{Stokes-Brinkman-aee}

\begin{thebibliography}{10}

\bibitem{Ainsworth-2000-AEE}
Mark Ainsworth and J.~Tinsley Oden.
\newblock {\em A Posterori Error Estimation in Finite Element Analysis}.
\newblock Wiley, 2000.

\bibitem{Arbogast2007}
Todd Arbogast and Dana~S. Brunson.
\newblock A computational method for approximating a {Darcy-Stokes} system
  governing a vuggy porous medium.
\newblock {\em Computational Geosciences}, 11(3):207--218, Sep 2007.

\bibitem{Paraview}
Utkarsh Ayachit.
\newblock {\em The ParaView Guide: A Parallel Visualization Application}.
\newblock Kitware, 2015.

\bibitem{Bear-1972-PM}
J.~Bear.
\newblock {\em Dynamics of Fluids in Porous Media}.
\newblock Elsevier, 1972.

\bibitem{Beavers-1967-FPM}
G.S. Beavers and D.D. Joseph.
\newblock Boundary conditions at a naturally permeable wall.
\newblock {\em J. Fluid Mech}, 30:197–--207, 1967.

\bibitem{Brezzi-1991-MHF}
F.~Brezzi and M.~Fortin.
\newblock {\em Mixed and Hybrid Finite Element Methods}.
\newblock Springer-Verlag, New York -- Berlin -- Heidelberg, 1991.

\bibitem{Brinkman-1948-SBE}
H.~C. Brinkman.
\newblock A calculation of the viscous force exerted by a flowing fluid on a
  dense swarm of particles.
\newblock {\em Appl. Sci. Res.}, A1:27--34, 1948.

\bibitem{Burda-2000-AEE}
Pavel Burda.
\newblock An a posteriori error estimate for the {Stokes} problem in a
  polygonal domain using {Hood-Taylor} elements.
\newblock In P.~Neittaanm\"{a}ki, T.~Tiihonen, and P.~Tarvainen, editors, {\em
  Proceedings of the 3rd European Conference on Numerical Mathematics and
  Advanced Applications, ENUMATH 99, Jyv\"{a}skyl\"{a}, Finland, 26--30 July
  1999}, pages 448--455, Singapore, 2000. World Scientific.

\bibitem{Burda-2001-AEE}
Pavel Burda.
\newblock A posteriori error estimates for the {Stokes} flow in {2D} and {3D}
  domains.
\newblock In P.~Neittaanm\"{a}ki and M.~K\v{r}\'{i}\v{z}ek, editors, {\em
  Finite Element Methods, 3D Problems}, {GAKUTO} Int. Ser., Math. Sci.
  Appl.~15, pages 34--44, 2001.

\bibitem{Burda-2015-AEE}
Pavel Burda and Martin Hasal.
\newblock An a posteriori error estimate for the {Stokes-Brinkman} problem in a
  polygonal domain.
\newblock In {\em Programs and Algorithms of Numerical Mathematics}, pages
  32--40. Institute of Mathematics AS CR, 2015.

\bibitem{Burda-2003-AEE}
Pavel Burda, Jaroslav Novotn\'{y}, and Bed\v{r}ich Soused\'{i}k.
\newblock A posteriori error estimates applied to flow in a channel with
  corners.
\newblock {\em Math. Comput. Simulat.}, 61:375--383, 2003.

\bibitem{Ciarlet-2002-FEM}
Philippe~G. Ciarlet.
\newblock {\em The Finite Element Method for Elliptic Problems}, volume~40 of
  {\em Classics in Applied Mathematics}.
\newblock Society for Industrial and Applied Mathematics (SIAM), Philadelphia,
  PA, 2002.
\newblock Reprint of the 1978 original [North-Holland, Amsterdam].

\bibitem{Darcy-1856-FPM}
H.~Darcy.
\newblock {\em Les fontaines publiques de la ville de {Dijon}}.
\newblock Dalmont, Paris, 1856.

\bibitem{Elman-2014-FEF}
H.~C. Elman, D.~J. Silvester, and A.~J. Wathen.
\newblock {\em Finite elements and fast iterative solvers: with applications in
  incompressible fluid dynamics}.
\newblock Oxford University Press, New York, second edition, 2014.

\bibitem{Eriksson-1995-IAM}
Kenneth Eriksson, Don Estep, Peter Hansbo, and Claes Johnson.
\newblock Introduction to adaptive methods for differential equations.
\newblock {\em Acta Numerica}, 4:105--158, 1995.

\bibitem{Gulbransen-2010-MMF}
Astrid~Fossum Gulbransen, Vera~Louise Hauge, and Knut-Andreas Lie.
\newblock A multiscale mixed finite-element method for vuggy and
  naturally-fractured reservoirs.
\newblock {\em SPE Journal}, 15(2):395--403, 2010.

\bibitem{Laptev-2003-PPM}
Vsevolod Laptev.
\newblock {\em Numerical solution of coupled flow in plain and porous media}.
\newblock PhD thesis, Technical University of Kasierslautern, 2003.

\bibitem{Popov-2007-MMM}
P.~Popov, L.~Bi, Y.~Efendiev, R.~E. Ewing, G.~Qin, J.~Li, and Y.~Ren.
\newblock Multi-physics and multi-scale methods for modeling fluid flow through
  naturally-fractured vuggy carbonate reservoirs.
\newblock In {\em Proceedings of the 15th SPE Middle East Oil \& Gas Show and
  Conference, Bahrain, 11-14 March}, 2007.
\newblock (Paper SPE 105378).

\bibitem{Schoberl-1997}
Joachim Sch{\"o}berl.
\newblock {NETGEN} - an advancing front {2D/3D}-mesh generator based on
  abstract rules.
\newblock {\em Computing and Visualization in Science}, 1(1):41--52, Jul 1997.

\bibitem{URQUIZA2008525}
J.M. Urquiza, D.~N'Dri, A.~Garon, and M.C. Delfour.
\newblock Coupling {Stokes} and {Darcy} equations.
\newblock {\em Applied Numerical Mathematics}, 58(5):525 -- 538, 2008.

\bibitem{Verfurth-2013-AEE}
R.~Verf{\"u}rth.
\newblock {\em A Posteriori Error Estimation Techniques for Finite Element
  Methods}.
\newblock A Posteriori Error Estimation Techniques for Finite Element Methods.
  OUP Oxford, 2013.

\bibitem{Whitaker-1986-FPM}
Stephen Whitaker.
\newblock Flow in porous media {I}: A theoretical derivation of {Darcy's} law.
\newblock {\em Transport in Porous Media}, 1(1):3--25, Mar 1986.

\end{thebibliography}

\end{document}